\begin{document}
\title[Nodal solutions of fourth-order Kirchhoff equations]
{Nodal solutions of fourth-order Kirchhoff equations with critical growth in $\mathbb{R}^N$}

\author[H. Pu, S. Li, S. Liang,  D. D. Repov\v{s}]
{Hongling Pu, Shiqi Li, Sihua Liang, Du\v{s}an D. Repov\v{s}}

\address{Hongling Pu \newline
College of Mathematics,
Changchun Normal University,
Changchun 130032, China}
\email{pauline\_phl@163.com}

\address{Shiqi Li \newline
College of Mathematics,
Changchun Normal University,
Changchun 130032,  China}
\email{lishiqi59@126.com}

\address{Sihua Liang \newline
College of Mathematics,
Changchun Normal University,
Changchun 130032,  China}
\email{liangsihua@163.com}

\address{Du\v{s}an D. Repov\v{s}\newline
Faculty of Education and Faculty of Mathematics and Physics,
University of Ljubljana \& Institute of Mathematics, Physics and Mechanics,
Ljubljana, 1000, Slovenia}
\email{dusan.repovs@guest.arnes.si}

\subjclass[2010]{35A15, 35J60, 47G20}
\keywords{Fourth-order elliptic equation; Kirchhoff problem; critical exponent;
\hfill\break\indent variational methods; nodal solution}

\begin{abstract}
 We consider a class of  fourth-order elliptic equations of
 Kirchhoff type with critical growth in $\mathbb{R}^N$.
 By using  constrained minimization in the Nehari manifold, we
 establish sufficient conditions for the existence  of nodal
 (that is, sign-changing) solutions.
\end{abstract}

\maketitle
\numberwithin{equation}{section}
\newtheorem{theorem}{Theorem}[section]
\newtheorem{lemma}[theorem]{Lemma}
\newtheorem{definition}[theorem]{Definition}
\allowdisplaybreaks

\section{Introduction}

In this article we studies the existence of nodal solutions to the fourth-order
elliptic equations of Kirchhoff type with critical growth in $\mathbb{R}^N$,
\begin{equation}\label{e1.1}
\Delta^2 u-\Big(1+b\int_{\mathbb{R}^N} |\nabla u|^2\,dx\Big)\Delta u+V(x)u
=\lambda f(u)+|u|^{2^{**}-2}u, \quad x\in\mathbb{R}^N,
\end{equation}
where $\Delta^2 u$ is the biharmonic operator, $2^{**}=2N/(N-4)$ is
the critical Sobolev exponent with $5 \leq N < 8$, and $b$ and $\lambda$
are  positive parameters. The continuous functions $V(x)$ and
$f(u)$ satisfy the following conditions:
\begin{itemize}
\item[(A1)] $V\in C(\mathbb{R}^N,\mathbb{R})$ satisfies
$\underset{x\in\mathbb{R}^N}{\inf}V(x)\ge V_0>0$, where $V_0$ is a positive
constant. For each $M > 0$, $\operatorname{meas}\{x \in  \mathbb{R}^N: V(x) \leq
M\} < \infty$, where $\operatorname{meas}(\cdot)$ denotes the Lebesgue measure on
$\mathbb{R}^N$;

\item[(A2)] $f \in C^1(\mathbb{R},\mathbb{R})$ and $f(u)=o(|u|)$, as $u\to 0$;

\item[(A3)] There exists $p\in(4, 2^{**})$ such that $\lim_{u\to\infty}f(u)/u^{p-1}=0$;

\item[(A4)] $\lim_{u\to\infty}F(u)/u^4=+\infty$, where $F(u)=\int_0^u f(t)dt$;

\item[(A5)] $f(u)/|u|^3$ is a strictly increasing function for
$u\in\mathbb{R}\setminus\{0\}$.
\end{itemize}

Problem \eqref{e1.1} originates from the  Kirchhoff equation
\begin{equation}\label{e1.2}
-\Big(a+b\int_\Omega |\nabla u|^2\,dx\Big)\Delta u=f(x,u)\quad\text{in }\Omega,
\end{equation}
where $\Omega\in\mathbb{R}^N$ is a bounded domain, $a>0$, $b\ge0$, and $u$ satisfies certain
boundary conditions. The above equation stems from a typical model proposed by
Kirchhoff \cite{ki},
\begin{equation}\label{e1.3}
u_{tt}-\Big(a+b\int_\Omega |\nabla u|^2\,dx\Big)\Delta u=f(x,u),
\end{equation}
which serves as a generalization of the classical D'Alembert wave equation
\begin{eqnarray*}
\rho\frac{\partial^2u}{\partial t^2}-\Big(\frac{\rho_0}{h} +
\frac{E}{2L}\int_0^L|\frac{\partial u}{\partial
x}|^2dx\Big)\frac{\partial^2u}{\partial x^2} = f(x,u),
\end{eqnarray*}
by taking into account the effects of changes in the length of
strings during vibrations. The nonlocal term thus appears. See for
example \cite{car1, do} for more background on such problems. Thanks
to the pioneering work of Lions \cite{lions} on problem
\eqref{e1.3}, a lot of attention has been drawn to these nonlocal
problems during the last decade. That was followed by some
interesting results on the existence of various solutions to
\eqref{e1.2}, including positive solutions, multiple solutions,
bound state solutions, multibump solutions, and semiclassical state
solutions, both on bounded domains and on the entire space. For more
results on the Kirchhoff-type equations we  refer to \cite{ch, LS,
liang1, LPZ, MRS,  sun} and the references therein. Problem
\eqref{e1.2} with critical nonlinearity, however, is seldom
covered, mainly because of the challenge - the lack of compactness -
presented by the presence of the critical Sobolev exponent. We also
refer the interested readers to \cite{fis2, liang3,  pu3, x2, x3,
z3} on the fractional  Kirchhoff type problems.

Recently, various approaches have been adopted for considering the  fourth-order elliptic
equations of the Kirchhoff type,
\begin{gather*}
\Delta^2 u-\Big(a+b\int_\Omega |\nabla u|^2\,dx\Big)\Delta u
 =f(x,u),\quad  x\in\Omega,\\
u=\Delta u=0,\quad  x\in\partial\Omega,
\end{gather*}
where $\Delta^2 u$ is the biharmonic operator, with different
hypotheses on the nonlinearity. For instance, Ma \cite{ma} studied
the existence and multiplicity of positive solutions to the
fourth-order equation with the fixed point theorems in cones of
ordered Banach spaces. Wang et al. \cite{wang} applied the mountain
pass and the truncation methods to get the existence of nontrivial
solutions to the fourth-order elliptic equations of the Kirchhoff type with
one parameter $\lambda$. Liang and Zhang \cite{liang2} used the
variational methods to obtain the existence and multiplicity of
solutions to the fourth-order elliptic equations of the Kirchhoff type with
critical growth in $\mathbb{R}^N$.

The motivation for this paper comes from \cite{sh, ta1, wa3, z2}. In
\cite{sh}, the existence was proved of one least energy nodal
solution $u_b$ to problem \eqref{e1.2}, with its energy strictly
larger than the ground state energy. Meanwhile, the asymptotic
behavior of $u_b$, as the parameter $b\searrow 0$, was investigated as
well. Later, under some more weak assumptions on $f$ (especially,
with the Nehari type monotonicity condition removed), Tang and Cheng
\cite{ta1} improved and generalized some results obtained in
\cite{sh} with some new analytical skills and the non-Nehari manifold
method. In \cite{wa3}, the authors obtained the existence of least
energy nodal solutions to the Kirchhoff-type equation with critical
growth in bounded domains by using the constraint variational method
and the quantitative deformation lemma. In \cite{z2}, the authors
studied the  fourth-order elliptic equation of the
Kirchhoff-type,
\begin{gather*}
\Delta^2 u-\Big(a+b\int_{\mathbb{R}^N} |\nabla u|^2\,dx\Big)\Delta u+V(x)u
= f(u), \quad x\in\mathbb{R}^N,\\
u \in H^2(\mathbb{R}^N),
\end{gather*}
where $a > 0$ and $b \geq 0$ are constants. By the constraint
variational method and the quantitative deformation lemma, they proved
that the problem possesses one least energy nodal solution. For more
results on nodal solutions to the Kirchhoff-type equations, please refer
to \cite{de, lif, lu, z1} and the references therein. However, to
the best of our knowledge, there are no such results concerning
the existence of nodal solutions of the problem \eqref{e1.1}
involving critical nonlinearities in the whole space.

The purpose of this article is to study  the existence, energy
estimates and convergence properties of the least energy nodal
solutions to the fourth-order elliptic equation \eqref{e1.1}.
The novelty of this paper is that  problem \eqref{e1.1} concerns the critical
case on the entire space. Based on these facts, the problem turns out to be extremely
complicated and more difficult than the one without critical
nonlinearities in bounded domains. Since problem \eqref{e1.1}
involves critical exponents in the nonlinearity, it is rather
difficult to show that the energy functional reaches a lower infimum
on the Nehari manifold  because of the lack of compactness caused by the
critical term. As we will see, this problem prevents us from
using the approach in \cite{a2, sh, ta1, z2}. So we need some new
ideas to  overcome the above difficulties. Moreover, we use the
constraint variational method, the topological degree theory and the
quantitative deformation lemma to prove our main results.  Thus, our
main results generalize papers \cite{a2, sh, ta1, z2} in several
directions.

Before stating our main results, we define
\[
H^2(\mathbb{R}^N)
:=\{ u\in L^2(\mathbb{R}^N): |\nabla u|, \Delta u\in L^2(\mathbb{R}^N)\},
\]
endowed with the norm
\[
\| u\|_{H^2(\mathbb{R}^N)}
=\Big( \int_{\mathbb{R}^N} \big( (\Delta u)^2+(\nabla u)^2+u^2\big)\,dx \Big)^{1/2}.
\]
Now, we introduce the space
\[
E:=\big\{ u\in H^2(\mathbb{R}^N): \int_{\mathbb{R}^N}V(x)|u|^2\,dx<\infty \big\}
\]
with the inner product
\[
\langle u,v\rangle
=\int_{\mathbb{R}^N} \left( \Delta u\Delta v+\nabla u\nabla v+V(x)uv \right)\,dx
\]
and the norm
\[
\| u\| =\int_{\mathbb{R}^N} \left( |\Delta u|^2+|\nabla
u|^2+V(x)|u|^2 \right)\,dx.
\]
Under condition (A1), it is known that the embedding
$E\hookrightarrow H^2(\mathbb{R}^N)\hookrightarrow L^p(\mathbb{R}^N)$ for $p\in
(2,2^{**})$ is compact, and
continuous for $p\in [2,2^{**}]$ (see \cite{bar2}), and
\begin{equation}\label{e1.4}
S_p|u|_p\leq\|u\|,\quad \text{for every }  u\in E.
\end{equation}
In particular,  the best Sobolev constant for the embedding
$E\hookrightarrow L^{2^{**}}(\mathbb{R}^N)$ is
\[
S=\inf\big\{\int_{\mathbb{R}^N}|\Delta u|^2dx: \int_{\mathbb{R}^N}|u|^{2^{**}}dx=1 \big\}.
\]

\begin{definition} \rm
We say that $u\in E$ is a weak solution to problem \eqref{e1.1}, if
\begin{align*}
&\int_{\mathbb{R}^N}\left( \Delta u\Delta\phi+\nabla u\nabla\phi+V(x)u\phi
\right)\,dx +b \int_{\mathbb{R}^N}|\nabla u|^2dx \int_{\mathbb{R}^N}\nabla u\nabla\phi
\,dx\\
&=\lambda\int_{\mathbb{R}^N}f(u)\phi\,dx +\int_{\mathbb{R}^N}|u|^{2^{**}-2}u\phi\,dx,
\end{align*}
for every $\phi\in E$.
\end{definition}

For convenience, we will omit the term weak throughout this
paper. The corresponding energy functional $I_b^\lambda : E\to\mathbb{R}$ to
problem \eqref{e1.1} is defined by
\begin{equation} \label{e1.5}
\begin{aligned}
I_b^\lambda(u)
= &\frac{1}{2}\int_{\mathbb{R}^N}\big(|\Delta
u|^2+|\nabla u|^2+V(x)|u|^2 \big)\,dx +\frac{b}{4}\Big(
\int_{\mathbb{R}^N}|\nabla u|^2\,dx \Big)^2
\\
&-\lambda\int_{\mathbb{R}^N}F(u)\,dx
-\frac{1}{2^{**}}\int_{\mathbb{R}^N}|u|^{2^{**}}dx.
\end{aligned}
\end{equation}
It is easy to see that $I_b^\lambda$ belongs to $C^1(E,\mathbb{R})$ and the
critical points of $I_b^\lambda$ are the solutions to \eqref{e1.1}.
For every $u\in E$ we can write
\[
u^+(x)=\max\{u(x),0\}\quad \text{and} \quad u^-(x)=\min\{u(x),0\}\,.
\]
Then every solution $u\in E$ to problem \eqref{e1.1}
with the property that $u^{\pm}\neq 0$ is a nodal solution to
problem \eqref{e1.1}.

Our objective  is to find the least energy nodal solutions to
problem \eqref{e1.1}. There exist several interesting
studies on the following typical semilinear equation, which is
related to problem \eqref{e1.1} (see \cite{bar1, bar2}),
\begin{equation}\label{e1.8}
-\Delta u+V(x)u=f(x,u) \quad \text{in } \mathbb{R}^N.
\end{equation}
These methods, however, depend heavily upon the decompositions:
\begin{gather}\label{e1.9}
J(u)=J(u^+)+J(u^-), \\
\label{e1.10}
\langle J'(u),u^+\rangle=\langle J'(u^+),u^+\rangle \quad \text{and} \quad
\langle J'(u),u^-\rangle=\langle J'(u^-),u^-\rangle,
\end{gather}
where $J$ is the energy functional of \eqref{e1.8}, given by
\[
J(u) = \frac{1}{2}\int_{\mathbb{R}^N}(|\nabla
u|^2+V(x)u^2)\,dx-\int_{\mathbb{R}^N}F(x,u)\,dx.
\]
However, if $b>0$, the energy functional $I_b^{\lambda}$ cannot be
decomposed in the same way as it is done in \eqref{e1.9} and
\eqref{e1.10}. In fact, we have
\begin{align*}
 I_b^{\lambda}(u) = I_b^{\lambda}(u^{+}) + I_b^{\lambda}(u^{-}) +
 \frac{b}{2}\int_{\mathbb{R}^N}|\nabla u^+|^2dx\int_{\mathbb{R}^N}|\nabla u^-|^2dx;
\end{align*}
if $u^+ \not\equiv 0$, then
\begin{align*}
\langle (I_b^{\lambda})'(u),u^{+}\rangle = \langle
(I_b^{\lambda})'(u^+),u^{+}\rangle + b\int_{\mathbb{R}^N}|\nabla
u^+|^2dx\int_{\mathbb{R}^N}|\nabla u^-|^2dx > \langle
(I_b^{\lambda})'(u^+),u^+\rangle;
\end{align*}
if $u^- \not\equiv 0$, then
\begin{align*}
\langle (I_b^{\lambda})'(u),u^{-}\rangle = \langle
(I_b^{\lambda})'(u^-),u^{-}\rangle + b\int_{\mathbb{R}^N}|\nabla
u^-|^2dx\int_{\mathbb{R}^N}|\nabla u^+|^2dx > \langle
(I_b^{\lambda})'(u^-),u^-\rangle.
\end{align*}
 Therefore, the methods used for obtaining nodal solutions to the
local problem \eqref{e1.8} do not seem applicable to problem
\eqref{e1.1}. In this paper, we follow the approach in \cite{bar} by
defining the constrained set
\begin{equation}\label{e1.11}
\mathcal{N}_b^\lambda =\{u\in E: u^\pm\neq 0, \langle
(I_b^\lambda)'(u),u^\pm\rangle=0\}
\end{equation}
and considering a minimization problem of $I_b^\lambda$ on
$\mathcal{N}_b^\lambda$.
 Shuai \cite{sh} proved that $\mathcal{N}_b^\lambda \neq \emptyset$,
in the absence of the nonlocal term, by applying the
parametric method and the implicit theorem. However, it is the nonlocal
terms in problem \eqref{e1.1}, the biharmonic operator and the
nonlocal term involved, that add to our difficulties. Roughly
speaking, compared to the general Kirchhoff type problem
\eqref{e1.2}, decompositions \eqref{e1.9} and \eqref{e1.10}
corresponding to $I_b^\lambda$, are much more complicated, which
accounts for some technical difficulties during the proof of the
nonemptiness of $\mathcal{N}_b^\lambda$. Moreover, the parametric method and
implicit theorem are not applicable to problem \eqref{e1.1} because
the complexity of the nonlocal problem there. Hence, inspired by
\cite{al}, we follow a different path, specifically, we resort to a
modified Miranda's theorem (see \cite{mi}). It is also feasible to
prove that the minimizer of the constrained problem is also a nodal
solution via the quantitative deformation lemma and  degree theory.
We can now present our first main result.

\begin{theorem}\label{th1.1}
Assume that {\rm (A1)---(A5)} hold. Then there exists $\lambda^* > 0$
such that for all $\lambda \geq  \lambda^*$,  problem \eqref{e1.1} has a least energy
nodal solution  $u_b\in\mathcal{N}_b^\lambda$ such that
$I_b^\lambda(u_b)=\inf_{u\in\mathcal{N}_b^\lambda} I_b^\lambda(u)$.
\end{theorem}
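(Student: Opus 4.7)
The plan is to realize the infimum $c_b^\lambda:=\inf_{\mathcal{N}_b^\lambda} I_b^\lambda$ as a minimum and then promote the minimizer to a true critical point of $I_b^\lambda$ via a deformation-plus-degree argument. First, I would show that for each $u\in E$ with $u^{\pm}\neq 0$ there exists a unique pair $(s_u,t_u)\in(0,\infty)^2$ such that $s_u u^{+}+t_u u^{-}\in\mathcal{N}_b^\lambda$, and that this pair is the unique global maximizer of $(s,t)\mapsto I_b^\lambda(su^{+}+tu^{-})$ on $(0,\infty)^2$. Because of the nonlocal Kirchhoff coupling, the two scalar equations $\langle (I_b^\lambda)'(su^{+}+tu^{-}),su^{+}\rangle=0$ and the analogous one with $tu^{-}$ do not decouple into independent one-variable problems, so the implicit function argument of \cite{sh} fails; instead I would use a modified Miranda-type theorem to produce a zero of the resulting vector field on a suitable rectangle $[r,R]^2$, with existence of the rectangle coming from (A2)--(A5) (lower bound of the map on small scales, negativity of the energy derivative on large scales thanks to (A4) and $p>4$). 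Monotonicity (A5), together with $p>4$, yields uniqueness by a standard comparison of the two vector components along the $\mathcal{N}_b^\lambda$-defining equations.

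Second, I would prove that $\mathcal{N}_b^\lambda$ is bounded away from $0$ and that $I_b^\lambda$ is coercive on it. By (A2)--(A3), for any $\varepsilon>0$ there is $C_\varepsilon>0$ with $|f(u)|\le\varepsilon|u|+C_\varepsilon|u|^{p-1}$, and the Nehari identity together with \eqref{e1.4} then gives a positive lower bound $\|u^{\pm}\|\ge\rho_0>0$ on $\mathcal{N}_b^\lambda$. Writing $I_b^\lambda(u)=I_b^\lambda(u)-\tfrac{1}{4}\langle (I_b^\lambda)'(u),u\rangle$ exploits the fact that (A5) implies $\tfrac{1}{4}f(u)u-F(u)\ge 0$, leaving the coercive terms $\tfrac{1}{4}\|u\|^2+(\tfrac{1}{4}-\tfrac{1}{2^{**}})|u|_{2^{**}}^{2^{**}}$, so any minimizing sequence $\{u_n\}\subset\mathcal{N}_b^\lambda$ is bounded in $E$. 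Passing to a subsequence, $u_n\rightharpoonup u_b$ in $E$, $u_n\to u_b$ in $L^q(\mathbb{R}^N)$ for every $q\in(2,2^{**})$ by the compact embedding following (A1), and $u_n^{\pm}\to u_b^{\pm}$ a.e.

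The main obstacle, and the reason the largeness of $\lambda$ enters, is recovering compactness of the critical term. I would show $c_b^\lambda<\tfrac{2}{N}S^{N/4}$ by evaluating $I_b^\lambda$ along $s w^{+}+t w^{-}$ for a fixed test function with $w^{\pm}\neq 0$; by (A4), $\int F(sw^{+}+tw^{-})\to\infty$ faster than quartically in $(s,t)$ as $\lambda\to\infty$, so the max of $I_b^\lambda(sw^{+}+tw^{-})$ over $(s,t)$ (which is an upper bound for $c_b^\lambda$) becomes smaller than the compactness threshold once $\lambda\ge\lambda^*$. Below this threshold a Brezis--Lieb decomposition rules out concentration, so $u_n\to u_b$ strongly in $E$, yielding $u_b^{\pm}\neq 0$ (by the $\rho_0$ lower bound), $u_b\in\mathcal{N}_b^\lambda$, and $I_b^\lambda(u_b)=c_b^\lambda$.

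Finally, I would show $u_b$ is a free critical point by a quantitative deformation lemma plus Brouwer degree argument in the spirit of \cite{bar,wa3}. Suppose $(I_b^\lambda)'(u_b)\neq 0$; by continuity of $(I_b^\lambda)'$ there is a neighborhood on which its norm stays bounded below, and the quantitative deformation lemma produces a homotopy $\eta$ that strictly decreases $I_b^\lambda$ in this neighborhood while leaving the complement fixed. Applying $\eta$ to the two-parameter family $h(s,t)=su_b^{+}+tu_b^{-}$ over a small square $\bar{D}\subset(0,\infty)^2$ around $(1,1)$, I would prove $\max_{\bar D}I_b^\lambda(\eta\circ h)<c_b^\lambda$. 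A Brouwer degree computation on the map $\Phi(s,t)=(\langle (I_b^\lambda)'(\eta\circ h),(\eta\circ h)^{+}\rangle,\langle (I_b^\lambda)'(\eta\circ h),(\eta\circ h)^{-}\rangle)$ then forces $\eta\circ h$ to hit $\mathcal{N}_b^\lambda$ at some $(s_0,t_0)\in D$, contradicting the definition of $c_b^\lambda$. The hardest technical point will be handling the cross Kirchhoff term in computing this degree, since it prevents $\Phi$ from being diagonal; I would control it using the $\rho_0$ lower bound on $\|u_b^{\pm}\|$ and the strict monotonicity in (A5) to guarantee the required sign conditions on the boundary of $D$.
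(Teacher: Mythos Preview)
Your overall architecture matches the paper's: Miranda's theorem for the unique pair $(s_u,t_u)$, the identity $I_b^\lambda(u)-\tfrac14\langle(I_b^\lambda)'(u),u\rangle$ for coercivity and the lower bound $\|u^\pm\|\ge\rho_0$, the threshold $c_b^\lambda<\tfrac{2}{N}S^{N/4}$ for large $\lambda$, and the quantitative deformation lemma plus Brouwer degree to upgrade the constrained minimizer to a free critical point. These parts are fine and essentially identical to the paper.

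The genuine gap is in your compactness step. You write that below the threshold ``a Brezis--Lieb decomposition rules out concentration, so $u_n\to u_b$ strongly in $E$, yielding $u_b^\pm\neq0$ (by the $\rho_0$ lower bound).'' Neither inference is justified as stated. First, $\{u_n\}$ is only a minimizing sequence on $\mathcal{N}_b^\lambda$, not a Palais--Smale sequence for $I_b^\lambda$, so the standard ``PS below the mountain-pass level'' mechanism does not apply; and the bound $\|u_n^\pm\|\ge\rho_0$ does not survive weak limits, so it cannot by itself give $u_b^\pm\neq0$. Second, even after one shows that the critical-norm defects $B_i:=\lim_n\int|u_n^\pm-u^\pm|^{2^{**}}$ vanish, the Sobolev inequality only gives $B_i\le C A_i^{2^{**}/2}$, so $B_i=0$ does not force $A_i:=\lim_n\|u_n^\pm-u^\pm\|^2=0$; strong convergence does not follow for free.

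What the paper actually uses, and what your sketch is missing, is the maximality property of points of $\mathcal{N}_b^\lambda$: since $(1,1)$ is the global maximum of $(\alpha,\beta)\mapsto I_b^\lambda(\alpha u_n^++\beta u_n^-)$, one has $I_b^\lambda(\alpha u_n^++\beta u_n^-)\le I_b^\lambda(u_n)$ for \emph{all} $\alpha,\beta\ge0$. Passing to the limit with Br\'ezis--Lieb produces the key two-parameter inequality (the paper's \eqref{e2.22}), from which one first proves $u^\pm\neq0$ (Step~1) and then $B_1=B_2=0$ (Step~2), in each case by playing the threshold against the free maximization in $\alpha,\beta$. Even then the paper does not claim strong convergence; instead it invokes the companion lemma (your missing ingredient) that if $\langle(I_b^\lambda)'(u),u^\pm\rangle\le0$ then the projecting pair satisfies $(\alpha_u,\beta_u)\in(0,1]^2$, and combines this with the monotonicity of $t\mapsto tf(t)-4F(t)$ to squeeze
\[
c_b^\lambda\le I_b^\lambda(\alpha_u u^++\beta_u u^-)-\tfrac14\langle(I_b^\lambda)'(\alpha_u u^++\beta_u u^-),\,\cdot\,\rangle\le\liminf_n\Big[I_b^\lambda(u_n)-\tfrac14\langle(I_b^\lambda)'(u_n),u_n\rangle\Big]=c_b^\lambda,
\]
forcing $\alpha_u=\beta_u=1$ and attainment. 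You should replace the one-line ``strong convergence'' claim by this two-parameter inequality together with the $(\alpha_u,\beta_u)\in(0,1]^2$ lemma; without them the argument does not close.
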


Another goal of this paper is to establish the so-called energy
doubling property (cf. \cite{we}), i.e., the energy of any nodal
solution to problem \eqref{e1.1} is strictly larger than twice the
ground state energy. The conclusion is trivial for the semilinear
equation problem \eqref{e1.8}.   When $b > 0$, a similar result was
obtained by Shuai \cite{sh} in a bounded domain $\Omega$. We are
also interested in whether energy doubling property still holds
for problem \eqref{e1.1}. To answer this question, we prove the
following result.

\begin{theorem}\label{th1.2}
Assume that {\rm(A1)--(A5)} hold. Then there exists $\lambda^{**} > 0$ such that for all
$\lambda \geq \lambda^{**}$,  $c^* := \inf_{u \in \mathcal{M}_b^\lambda} I_b^\lambda(u) > 0$
is achieved, and $I_b^\lambda(u)> 2c^*$, where
$\mathcal {M}_b^\lambda = \{u \in E\setminus\{0\}: \langle (I_b^\lambda)'(u), u \rangle = 0 \}$
and $u$ is the least energy nodal solution obtained in Theorem \ref{th1.1}. In
particular, $c^* > 0$ is achieved either by a positive or a negative function.
\end{theorem}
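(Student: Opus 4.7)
The plan is to split Theorem \ref{th1.2} into three subtasks: (i) establish that $c^*>0$ is attained on $\mathcal{M}_b^\lambda$; (ii) prove the energy-doubling inequality $I_b^\lambda(u_b)>2c^*$ via a Nehari-fibering argument on $u_b^\pm$; and (iii) deduce from (ii) that any minimizer of $c^*$ must have constant sign.

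For (i), I first observe that $c^*>0$. Combining (A2)--(A3) with \eqref{e1.4} yields, for every $u\in\mathcal{M}_b^\lambda$, a uniform positive lower bound on $\|u\|$. Using (A5), which gives $\tfrac{1}{4}f(u)u\geq F(u)$ for every $u\in\mathbb{R}$, the identity $I_b^\lambda(u)-\tfrac{1}{4}\langle (I_b^\lambda)'(u),u\rangle$ produces
\[
I_b^\lambda(u)\;\geq\;\frac{1}{4}\int_{\mathbb{R}^N}\bigl(|\Delta u|^2+|\nabla u|^2+V(x)u^2\bigr)dx\;\geq\; c_0>0\quad\text{for all }u\in\mathcal{M}_b^\lambda,
\]
and the same identity provides coercivity for any minimizing sequence $\{w_n\}\subset\mathcal{M}_b^\lambda$, giving boundedness in $E$. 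The critical term forces a threshold analysis: by testing with a cut-off of a Talenti-type extremal for $S$ and using (A4) to enlarge $\lambda^{**}$ if necessary, I would show
\[
c^*<\frac{2}{N}\,S^{N/4}\quad\text{for every }\lambda\geq\lambda^{**}.
\]
Below this threshold, Brezis--Lieb applied to the $L^{2^{**}}$-term together with the compact embeddings $E\hookrightarrow L^p(\mathbb{R}^N)$ for $p\in(2,2^{**})$ promote the weak limit $w\in E$ of $\{w_n\}$ to a strong $E$-limit; hence $w\in\mathcal{M}_b^\lambda$ and $I_b^\lambda(w)=c^*$. Choosing $\lambda^{**}\geq\lambda^*$ ensures simultaneous availability of the nodal solution $u_b$ from Theorem \ref{th1.1}.

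For (ii), a standard Nehari-fibering argument based on (A4)--(A5) produces unique $s,t>0$ such that $su_b^+\in\mathcal{M}_b^\lambda$ and $tu_b^-\in\mathcal{M}_b^\lambda$. A central step already used in the proof of Theorem \ref{th1.1} is that the map $\Phi(\sigma,\tau):=I_b^\lambda(\sigma u_b^++\tau u_b^-)$ attains its unique maximum on $[0,\infty)^2$ at $(\sigma,\tau)=(1,1)$. Combining this with the disjoint-support decomposition of $I_b^\lambda$ recalled in the introduction yields
\begin{align*}
I_b^\lambda(u_b)=\Phi(1,1)&\geq \Phi(s,t)\\
&=I_b^\lambda(su_b^+)+I_b^\lambda(tu_b^-)+\frac{b\,s^2t^2}{2}\int_{\mathbb{R}^N}|\nabla u_b^+|^2\,dx\int_{\mathbb{R}^N}|\nabla u_b^-|^2\,dx\\
&> I_b^\lambda(su_b^+)+I_b^\lambda(tu_b^-)\geq 2c^*,
\end{align*}
where the strict inequality uses $b>0$ and $u_b^\pm\not\equiv 0$, and the last step uses $su_b^\pm\in\mathcal{M}_b^\lambda$.

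For (iii), if a minimizer $w$ of $c^*$ satisfied $w^\pm\not\equiv 0$, then $w\in\mathcal{N}_b^\lambda$, and (ii) applied to $u_b$ together with $I_b^\lambda(u_b)=\inf_{\mathcal{N}_b^\lambda}I_b^\lambda$ would force $c^*=I_b^\lambda(w)\geq I_b^\lambda(u_b)>2c^*$, contradicting $c^*>0$. Hence $w$ is of constant sign, completing the theorem. The main obstacle I anticipate is the threshold estimate $c^*<(2/N)S^{N/4}$: extremals for $S$ do not sit in $E$ because of the potential $V$, so one must truncate them carefully and absorb the error terms produced by the lower-order parts of $\|\cdot\|$; assumption (A4) is then exploited to enlarge $\lambda$ so that $\lambda\int F(\cdot)\,dx$ dominates and pushes the tested energy below $(2/N)S^{N/4}$. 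Fixing $\lambda^{**}$ as this threshold (and at least as large as $\lambda^*$) closes the argument.
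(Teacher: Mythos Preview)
Your plan is correct and mirrors the paper's proof in structure: part (ii) via the fibering map $\Phi$ and the unique maximum at $(1,1)$ from Lemma~\ref{lem2.1}, and part (iii) by contradiction, are exactly what the paper does (the paper extracts the strict inequality in (ii) from $s,t\in(0,1)$ via Lemma~\ref{lem2.2} rather than from the positive Kirchhoff cross term, but either route works). The one substantive difference is in part (i): you propose truncated Talenti-type extremals to push $c^*$ below $\tfrac{2}{N}S^{N/4}$ and correctly anticipate difficulties because such profiles do not lie in $E$ under the potential $V$. The paper avoids this entirely by repeating the argument of Lemma~\ref{lem2.3} on $\mathcal{M}_b^\lambda$: fix any $v\in E\setminus\{0\}$, use (A4) to show that the fibering parameter $t_\lambda$ with $t_\lambda v\in\mathcal{M}_b^\lambda$ tends to $0$ as $\lambda\to\infty$, whence $c^*\to 0$ and the compactness threshold is crossed for all large $\lambda$ without ever touching extremal functions. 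This is simpler and sidesteps the obstacle you flag; since you already invoke (A4) to absorb the truncation errors, you are in effect one step away from the paper's more direct argument.
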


It is obvious that the energy of the nodal solution $u_b$ obtained
in Theorem \ref{th1.1} depends on $b$. Next, we establish a
convergence property of $u_b$ as $b \to 0$, which demonstrates a
relationship between $b > 0$ and $b = 0$ for problem \eqref{e1.1}.

\begin{theorem}\label{th1.3}
Assume that {\rm (A1)--(A5)} hold.
Then for any sequence $\{b_n\}$ with $b_n \to 0$ as $n \to \infty$, there exists a
subsequence, still denoted by $\{b_n\}$, such that $\{u_n\}$ strongly converges to $u_0$ in $E$
as $n\to \infty$, where $u_0$ is a least energy nodal solution to
the problem
\begin{equation}\label{e1.14}
\Delta^2 u -\Delta u+V(x) = \lambda f(u)+|u|^{2^{**}-2}u \quad\text{in } \mathbb{R}^N.
\end{equation}
\end{theorem}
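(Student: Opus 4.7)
The plan is to extract a strongly convergent subsequence $u_n \to u_0$ in $E$ and identify $u_0$ as a least energy nodal solution of the limit problem \eqref{e1.14}. The main obstacle is the loss of compactness due to the critical exponent, which I would circumvent by combining a Brezis--Lieb decomposition with the Nehari identity and an upper energy estimate.

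First, I would establish a uniform $E$-bound on $\{u_n\}$. The same method as in Theorem \ref{th1.1}, applied in the semilinear case $b = 0$, produces a least energy nodal solution $W \in \mathcal{N}_0^\lambda$ of \eqref{e1.14}. For small $b_n$, the Miranda-type argument from the proof of Theorem \ref{th1.1} yields $(t_n, s_n) \to (1, 1)$ with $t_n W^+ + s_n W^- \in \mathcal{N}_{b_n}^\lambda$, so that
\begin{equation*}
c_{b_n} := I_{b_n}^\lambda(u_n) \leq I_{b_n}^\lambda(t_n W^+ + s_n W^-) \longrightarrow I_0^\lambda(W) = c_0 .
\end{equation*}
Combining $c_{b_n} \leq C$ with $\langle (I_{b_n}^\lambda)'(u_n), u_n\rangle = 0$ and the monotonicity consequence $\tfrac{1}{4}f(u)u - F(u) \geq 0$ of (A5), one finds $\|u_n\|^2 \leq 4c_{b_n} + o(1)$. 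Passing to a subsequence, $u_n \rightharpoonup u_0$ in $E$, $u_n \to u_0$ in $L^q(\mathbb{R}^N)$ for $q \in (2, 2^{**})$ by (A1), and $u_n \to u_0$ a.e.

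Next, a passage to the limit in $(I_{b_n}^\lambda)'(u_n) = 0$ tested against $\phi \in E$ shows that $u_0$ solves \eqref{e1.14} weakly: the Kirchhoff contribution is controlled by $b_n \|u_n\|^3 \|\phi\| \to 0$; (A2)--(A3) together with the compact subcritical embedding handle the $f$-term; and the critical term converges weakly, since $\{|u_n|^{2^{**}-2}u_n\}$ is bounded in $L^{(2^{**})'}$ with a.e. limit $|u_0|^{2^{**}-2}u_0$. A standard Nehari-based lower bound $\|u_n^\pm\| \geq \delta > 0$, obtained from (A2)--(A3) and the Sobolev inequality, survives the passage to the limit, so $u_0^\pm \not\equiv 0$; hence $u_0 \in \mathcal{N}_0^\lambda$ and $I_0^\lambda(u_0) \geq c_0$.

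The hard part is to upgrade weak to strong convergence. Set $v_n := u_n - u_0$. Brezis--Lieb for $\|\cdot\|^2$ and $|\cdot|_{2^{**}}^{2^{**}}$, together with the subcritical compactness $\int F(u_n)\,dx \to \int F(u_0)\,dx$ and $\int f(u_n)u_n\,dx \to \int f(u_0)u_0\,dx$, yields
\begin{align*}
I_{b_n}^\lambda(u_n) &= I_0^\lambda(u_0) + \tfrac{1}{2}\|v_n\|^2 - \tfrac{1}{2^{**}}|v_n|_{2^{**}}^{2^{**}} + o(1), \\
\|v_n\|^2 - |v_n|_{2^{**}}^{2^{**}} &= o(1),
\end{align*}
the second line following by subtracting the Nehari identities for $u_n$ and (in the limit problem) for $u_0$ and using $b_n \|\nabla u_n\|_2^4 \to 0$. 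Combining $I_0^\lambda(u_0) \geq c_0$ with $\limsup_n I_{b_n}^\lambda(u_n) \leq c_0$ forces $\tfrac{1}{2}\|v_n\|^2 - \tfrac{1}{2^{**}}|v_n|_{2^{**}}^{2^{**}} \leq o(1)$, which together with $\|v_n\|^2 - |v_n|_{2^{**}}^{2^{**}} \to 0$ gives $(\tfrac{1}{2} - \tfrac{1}{2^{**}})\|v_n\|^2 \leq o(1)$, hence $v_n \to 0$ in $E$. Taking the limit in the energy then gives $I_0^\lambda(u_0) = c_0$, so $u_0$ is a least energy nodal solution of \eqref{e1.14}, as required.
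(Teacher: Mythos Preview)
Your overall architecture matches the paper's: bound $c_{b_n}$ from above by projecting a least energy nodal solution of the limit problem onto $\mathcal{N}_{b_n}^\lambda$ with scaling parameters tending to $(1,1)$, pass to a weak limit $u_0$ that solves \eqref{e1.14}, and then squeeze the energies to conclude. Your Br\'ezis--Lieb step for strong convergence is in fact more explicit than the paper's final chain of inequalities (the paper writes $I_0^\lambda(u_0)=\lim_n I_{b_n}^\lambda(u_{b_n})$ without spelling out why), so that part is fine.

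There is, however, one genuine gap. Your justification of $u_0^\pm\not\equiv 0$ is incorrect as written: the uniform lower bound $\|u_n^\pm\|\ge\delta$ does \emph{not} survive weak limits. If $u_0^+=0$, compact subcritical embedding kills the $f$-terms, the Kirchhoff term vanishes with $b_n$, and the Nehari identity for $u_n^+$ reduces to $\|u_n^+\|^2=|u_n^+|_{2^{**}}^{2^{**}}+o(1)$; nothing prevents $|u_n^+|_{2^{**}}$ from staying bounded away from zero by concentration. To rule this out you must use the energy threshold: for the $\lambda\ge\lambda^*$ under which Theorem~\ref{th1.1} is proved one has $c_0<\tfrac{2}{N}S^{N/4}$ (this is exactly \eqref{e2.24}, carried over to $b=0$), and then the relation $\|u_n^+\|^2=|u_n^+|_{2^{**}}^{2^{**}}+o(1)$ together with the Sobolev inequality would force a contribution of at least $\tfrac{2}{N}S^{N/4}$ to $\liminf c_{b_n}$, contradicting $\limsup c_{b_n}\le c_0$. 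The paper handles this point by invoking the argument of Lemma~\ref{lem2.4}, Step~1 (its reference to ``the process of Lemma~\ref{lem2.3}'' appears to be a slip). Once you patch this step with the threshold argument, the rest of your proof goes through.
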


The structure of this article is as follows:
Section 2 contains the proof of the achieving the least
energy for the constraint problem \eqref{e1.1}.
While section 3 is devoted to the proofs of our main theorems.

Throughout this paper, we use standard notation. For simplicity,
we use ``$\to$'' and ``$\rightharpoonup$'' to denote the strong and weak convergence
in the related function space, respectively.
By $C$ and $ C_{i}$ we denote various positive constants, and by ``$:=$'' definitions.
To simplify the notation, we denote a subsequence of a sequence $\{u_n \}_n$ also as
$\{u_n \}_n$, unless otherwise  specified.

\section{Some technical lemmas}

To begin, fix $u \in E$ with $u^{\pm} \neq 0$. Consider the  function
$\varphi: \mathbb{R}_+\times \mathbb{R}_+\to\mathbb{R}$ and the
mapping $W: \mathbb{R}_+\times
\mathbb{R}_+\to\mathbb{R}^2$, where
\begin{gather}\label{e2.1}
\varphi(\alpha,\beta)=I_b^\lambda(\alpha u^++\beta u^-), \\
\label{e2.2}
W(\alpha,\beta)=\left(\langle(I_b^\lambda)'(\alpha u^++\beta u^-),\alpha u^+\rangle,
\langle(I_b^\lambda)'(\alpha u^++\beta u^-),\beta u^-\rangle\right).
\end{gather}
For brevity, we define the quantities
\[
A^+(u)=\int_{\mathbb{R}^N} |\nabla u^+|^2\,dx,
\quad
A^-(u)=\int_{\mathbb{R}^N} |\nabla u^-|^2\,dx,
\quad
B(u)=\int_{\mathbb{R}^N}\Delta u^+\Delta u^-\,dx.
\]

\begin{lemma}\label{lem2.1}
Assume that {\rm (A1)--(A5)} hold. Then for any $u \in E$ with
$u^{\pm} \neq 0$, there is the unique maximum point pair of positive
numbers $(\alpha_u, \beta_u)$ such that $\alpha_uu^++\beta_uu^-\in
\mathcal{N}_b^\lambda$.
\end{lemma}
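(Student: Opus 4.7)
The plan is to recast the Nehari condition $\alpha_u u^+ + \beta_u u^- \in \mathcal{N}_b^\lambda$ as the single vector equation $W(\alpha_u,\beta_u) = (0,0)$ on $(0,\infty)^2$ and to produce a solution via the modified Miranda theorem \cite{mi} on a rectangle $[r,R]\times[r,R]$. After expanding, and using that $u^+$ and $u^-$ have disjoint supports (so that $\int\nabla u^+\cdot\nabla u^-\,dx = 0$, $\int Vu^+u^-\,dx = 0$, and the pure-power cross terms vanish), one obtains
\begin{align*}
W_1(\alpha,\beta) &= \alpha^2\|u^+\|^2 + \alpha\beta B(u) + b\alpha^2 A^+(u)\bigl(\alpha^2 A^+(u) + \beta^2 A^-(u)\bigr) \\
&\quad - \lambda\alpha\int_{\mathbb{R}^N} f(\alpha u^+)u^+\,dx - \alpha^{2^{**}}|u^+|_{2^{**}}^{2^{**}},
\end{align*}
together with a symmetric formula for $W_2$.

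To apply Miranda I need sign conditions on opposite edges of $[r,R]^2$. For the lower edges, (A2) gives $\lambda\alpha\int f(\alpha u^+)u^+\,dx = o(\alpha^2)$ as $\alpha\to 0$ uniformly for $\beta$ in a bounded range, and $\alpha^{2^{**}}|u^+|_{2^{**}}^{2^{**}} = o(\alpha^2)$ since $2^{**} > 2$. After choosing $r$ small and coupling $R$ to $r$ so that the indefinite biharmonic cross term $\alpha\beta B(u)$ and the Kirchhoff coupling $b\alpha^2\beta^2 A^+(u)A^-(u)$ are dominated by $\alpha^2\|u^+\|^2$ on $\{\alpha = r\}$, one obtains $W_1(r,\beta) > 0$; analogously $W_2(\alpha,r) > 0$. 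For the upper edges, since $2^{**} > 4$ the critical term $-\alpha^{2^{**}}|u^+|_{2^{**}}^{2^{**}}$ already has super-quartic growth in $\alpha$, and (A4) combined with (A5) yields $f(t)/t^3 \to +\infty$ as $|t|\to\infty$ (otherwise $F(t)/t^4$ would remain bounded, contradicting (A4)), providing a further super-quartic negative contribution that eventually outweighs every positive term in $W_1$, including the Kirchhoff piece $b\alpha^4(A^+(u))^2$. Hence $W_1(R,\beta) < 0$ for $R$ large, and symmetrically $W_2(\alpha,R) < 0$. The modified Miranda theorem then delivers $(\alpha_u,\beta_u)\in(r,R)^2$ with $W(\alpha_u,\beta_u) = (0,0)$, establishing the desired Nehari condition.

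For uniqueness I would assume two pairs $(\alpha_i,\beta_i)$, $i=1,2$, satisfy $W(\alpha_i,\beta_i) = (0,0)$; after a rescaling reduction to $0 < \alpha_1 \leq \alpha_2$, I would divide each $W_1$-identity by $\alpha_i^4$ and subtract, exploiting the strict monotonicity of $t\mapsto f(t)/|t|^3$ from (A5) together with the strict monotonicity of $t\mapsto t^{2^{**}-4}$, and handling the coupling through $b(\alpha_i^2 A^+(u) + \beta_i^2 A^-(u))$ by a case analysis. This forces $\alpha_1 = \alpha_2$, and a symmetric argument with $W_2$ then yields $\beta_1 = \beta_2$. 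Finally, $(\alpha_u,\beta_u)$ is a global maximum of $\varphi$ because $\varphi$ is continuous on $\mathbb{R}_+^2$ with $\varphi(0,0) = 0$, is positive in a deleted neighborhood of the origin, and tends to $-\infty$ as $\alpha + \beta \to \infty$ by (A4) (the superquartic growth of $F$ outstripping the quartic Kirchhoff term and the quadratic terms $\|u^\pm\|^2$), so its supremum is attained at an interior critical point, which by uniqueness must be $(\alpha_u,\beta_u)$.

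The principal obstacle I anticipate is the indefinite sign of the biharmonic cross term $B(u) = \int_{\mathbb{R}^N}\Delta u^+\Delta u^-\,dx$, which has no counterpart in the semilinear $-\Delta$ problems of \cite{sh,ta1,z2}: it couples $W_1$ to $\beta$ from the very start and forces a careful coupling of $r$ and $R$ in the Miranda step. Combined with the genuinely two-parameter Kirchhoff coupling $b(\alpha^2 A^+(u) + \beta^2 A^-(u))^2$, it also obliges me to replace the one-variable monotonicity trick familiar from the local semilinear setting with a genuinely two-dimensional comparison in the uniqueness step.
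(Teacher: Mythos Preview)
Your overall plan---Miranda on a rectangle for existence, (A5)-monotonicity for uniqueness, and $\varphi\to-\infty$ at infinity for the global maximum---is exactly the paper's. Two substantive departures are worth noting.

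\emph{Uniqueness.} The paper bypasses your two-variable subtraction and the case analysis on the Kirchhoff coupling via a rescaling reduction. One first shows that for any $v\in\mathcal{N}_b^\lambda$ the only admissible pair is $(1,1)$; this is the step that actually uses (A5) and $2^{**}>4$, and it is essentially one-dimensional (assume $\alpha_0\le\beta_0$, bound the cross and coupling terms by $\alpha_0$-powers, divide by $\alpha_0^4$, compare with the identity at $(1,1)$). Then for arbitrary $u$ with two candidate pairs $(\alpha_i,\beta_i)$, set $\sigma_1=\alpha_1 u^++\beta_1 u^-\in\mathcal{N}_b^\lambda$ and note that $(\alpha_2/\alpha_1)\sigma_1^++(\beta_2/\beta_1)\sigma_1^-\in\mathcal{N}_b^\lambda$ as well; the $(1,1)$-uniqueness applied to $\sigma_1$ gives $\alpha_2/\alpha_1=\beta_2/\beta_1=1$ immediately. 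This is considerably cleaner than dividing both $W_1$-identities by $\alpha_i^4$ and subtracting.

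\emph{Gaps.} Your global-maximum step is incomplete: ``$\varphi>0$ near the origin and $\varphi\to-\infty$ at infinity'' does not exclude the maximum lying on an axis $\{\alpha=0\}$ or $\{\beta=0\}$. The paper rules this out by computing $\partial_\alpha\varphi(\alpha,\bar\beta)>0$ for $\alpha$ small, so no point $(0,\bar\beta)$ can be a maximizer (and symmetrically in $\beta$). In the Miranda step, your scheme of ``coupling $R$ to $r$'' to control $\alpha\beta B(u)$ does not work as stated: forcing $r\beta|B(u)|<r^2\|u^+\|^2$ on the whole edge $\beta\in[r,R]$ requires $R<r\|u^+\|^2/|B(u)|$, which conflicts with the upper-edge requirement that $R$ be large. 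You have also inadvertently listed the \emph{positive} Kirchhoff coupling $b\alpha^2\beta^2 A^+A^-$ among the terms to be dominated. The paper's route is simply to drop $\alpha\beta B(u)$ from the lower bound on $W_1$ (tacitly treating $B(u)\ge 0$), which yields $W_1(r,\beta)>0$ for all $\beta\ge 0$ and lets $r$ and $R$ be chosen independently.
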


\begin{proof}
Our proof consists in verifying three claims.
\smallskip

\noindent\textbf{Claim 1.}
There exists a pair of positive numbers  $(\alpha_u,\beta_u)$ such that
$\alpha_uu^++\beta_uu^-\in \mathcal{N}_b^\lambda$, for any
$u \in E$ with $u^{\pm} \neq 0$.
Note that
\begin{align*}
&\langle(I_b^\lambda)'(\alpha u^++\beta u^-),\alpha u^+\rangle\\
&= \int_{\mathbb{R}^N}\Delta(\alpha u^++\beta u^-)\Delta \alpha u^+dx
+\int_{\mathbb{R}^N}|\nabla\alpha u^+|^2\,dx
+\int_{\mathbb{R}^N}V(x)|\alpha u^+|^2dx
\\
&\quad
+b\int_{\mathbb{R}^N}|\nabla(\alpha u^++\beta u^-)|^2dx
\int_{\mathbb{R}^N}|\nabla\alpha u^+|^2dx \\
&\quad -\lambda\int_{\mathbb{R}^N} f(\alpha u^+)\alpha u^+dx
-\int_{\mathbb{R}^N}|\alpha u^+|^{2^{**}}dx
\end{align*}
and
\begin{align*}
&\langle(I_b^\lambda)'(\alpha u^++\beta u^-),\beta u^-\rangle \\
&=\int_{\mathbb{R}^N}\Delta(\alpha u^++\beta u^-)\Delta \beta u^-dx
+\int_{\mathbb{R}^N}|\nabla\beta u^-|^2\,dx
+\int_{\mathbb{R}^N}V(x)|\beta u^-|^2dx
\\
&\quad
+b\int_{\mathbb{R}^N}|\nabla(\alpha u^++\beta u^-)|^2dx
\int_{\mathbb{R}^N}|\nabla\beta u^-|^2dx
\\
&\quad -\lambda\int_{\mathbb{R}^N} f(\beta u^-)\beta u^-dx
-\int_{\mathbb{R}^N}|\beta u^-|^{2^{**}}dx.
\end{align*}
By a direct computation we obtain that
\begin{equation} \label{e2.3}
\begin{aligned}
&\langle(I_b^\lambda)'(\alpha u^++\beta u^-),\alpha u^+\rangle\\
&= \alpha^2\|u^+\|^2
+\alpha^2\beta^2 b A^+(u) A^-(u)
+\alpha^4 b\left(A^+(u)\right)^2\\
&\quad +\alpha\beta B(u)
-\lambda\int_{\mathbb{R}^N} f(\alpha u^+)\alpha u^+dx
-\int_{\mathbb{R}^N}|\alpha u^+|^{2^{**}}dx
\end{aligned}
\end{equation}
and
\begin{equation} \label{e2.4}
\begin{aligned}
&\langle(I_b^\lambda)'(\alpha u^++\beta u^-),\beta u^-\rangle \\
&=\beta^2\|u^-\|^2+\alpha^2\beta^2 b A^+(u) A^-(u)
+\beta^4 b\left(A^-(u)\right)^2
\\
&\quad +\alpha\beta B(u)
-\lambda\int_{\mathbb{R}^N} f(\beta u^-)\beta u^-dx
-\int_{\mathbb{R}^N}|\beta u^-|^{2^{**}}dx.
\end{aligned}
\end{equation}
By assumptions (A2) and (A3), we have
\begin{equation}\label{e2.5}
\int_{\mathbb{R}^N}f(\alpha u^+)\alpha u^+dx \leq \varepsilon
\int_{\mathbb{R}^N}|\alpha u^+|^2\,dx + C_\varepsilon \int_{\mathbb{R}^N}|\alpha
u^+|^p\,dx.
\end{equation}
Choose $\varepsilon>0$ small enough such that $(1-\lambda\varepsilon
C_\varepsilon)>0$, which together with \eqref{e2.5} and \eqref{e2.3},
yields
\begin{align*}
\langle(I_b^\lambda)'(\alpha u^++\beta u^-),\alpha u^+\rangle
&\ge (1-\lambda\varepsilon C_\varepsilon)\alpha^2\|u^+\|^2
+\alpha^2\beta^2 b A^+(u) A^-(u) \\
&\quad
+\alpha^4 b\left(A^+(u)\right)^2
-\lambda C_\varepsilon \int_{\mathbb{R}^N}|\alpha u^+|^p\,dx
-\int_{\mathbb{R}^N}|\alpha u^+|^{2^{**}}dx.
\end{align*}
Since $2^{**}>4$, we have $\langle(I_b^\lambda)'(\alpha u^++\beta u^-),\alpha u^+\rangle>0$
for a small enough $\alpha$  and for all $\beta\ge0$.

Similarly, according to \eqref{e2.5} and \eqref{e2.4}, we get
$\langle(I_b^\lambda)'(\alpha u^++\beta u^-),\beta u^-\rangle>0$,
for small enough $\beta$  and all $\alpha\ge0$.
Hence, there exists $r>0$ such that
\begin{equation}\label{e2.6}
\langle(I_b^\lambda)'(ru^++\beta u^-),ru^+\rangle>0\quad \text{and} \quad
\langle(I_b^\lambda)'(\alpha u^++ru^-),ru^-\rangle>0,
\end{equation}
for all $\alpha, \beta\ge0$.

On the other hand,  by (A3) and (A4), we have
\begin{align}\label{e2.7}
f(t)t > 0, t \neq 0;\quad F(t) \geq 0,\quad t \in \mathbb{R}.
\end{align}
Now, choose $R>r$. For  sufficiently large  $R$, and by \eqref{e2.3},
\eqref{e2.4}, \eqref{e2.7}, we have
\begin{align}\label{e2.8}
\langle(I_b^\lambda)'(Ru^++\beta u^-),Ru^+\rangle<0\quad \text{and} \quad
\langle(I_b^\lambda)'(\alpha u^++Ru^-),Ru^-\rangle<0,
\end{align}
for all $\alpha,\beta\in [r,R]$.
Invoking Miranda's theorem \cite{mi}, together with \eqref{e2.6}
and \eqref{e2.8}, we can conclude that there exists
$(\alpha_u,\beta_u)\in \mathbb{R}_+ \times
\mathbb{R}_+$ such that $W(\alpha_u,\beta_u)=(0,0)$, i.e.,
$\alpha_uu^++\beta_uu^-\in \mathcal{N}_b^\lambda$.
\smallskip

\noindent\textbf{Claim 2.}
The pair $(\alpha_u,\beta_u)$ is unique.

\noindent $\bullet$ Case $u \in \mathcal{N}_b^\lambda$. Then we have
\[
\langle(I_b^\lambda)'(u),u^+\rangle=0\quad \text{and} \quad
\langle(I_b^\lambda)'(u),u^-\rangle=0,
\]
that is,
\begin{equation} \label{e2.9}
\begin{aligned}
& \|u^+\|^2+B(u)+bA^+(u)\big( A^+(u)+A^-(u) \big)\\
& =\lambda\int_{\mathbb{R}^N}f(u^+)u^+dx +\int_{\mathbb{R}^N}|u^+|^{2^{**}}dx
\end{aligned}
\end{equation}
and
\begin{equation} \label{e2.10}
\begin{aligned}
& \|u^-\|^2+B(u)+bA^-(u)\big( A^+(u)+A^-(u)  \big)\\
& =\lambda\int_{\mathbb{R}^N}f(u^-)u^-dx +\int_{\mathbb{R}^N}|u^-|^{2^{**}}dx.
\end{aligned}
\end{equation}
By Claim 1, we know  that there exists at least  one  positive
pair $(\alpha_0,\beta_0)$ satisfying $\alpha_0u^++\beta_0u^-\in
\mathcal{N}_b^\lambda$.

Next we show  that
$(\alpha_0,\beta_0)=(1,1)$ is the unique pair of numbers. Without
loss of generality, let us assume that $\alpha_0\leq\beta_0$. It follows from
\eqref{e2.8} that
\begin{equation} \label{e2.11}
\begin{aligned}
& \alpha_0^2\left(\|u^+\|^2+B(x)\right)
+\alpha_0^4b A^+(u)\left( A^+(u)+A^-(u)  \right)\\
& =\lambda\int_{\mathbb{R}^N} f(\alpha_0 u^+)\alpha_0 u^+\,dx
+\int_{\mathbb{R}^N}|\alpha_0 u^+|^{2^{**}}dx.
\end{aligned}
\end{equation}
If $\alpha_0<1$, then from \eqref{e2.9}, \eqref{e2.11} and
(A5), we have
\begin{equation} \label{e2.12}
\begin{aligned}
0& <[(\alpha_0)^{-2}-1]\left(\|u^+\|^2+B(u)\right)\\
&\leq \lambda\int_{\mathbb{R}^N}\Big(\frac{f(x,\alpha_0 u^+)}{(\alpha_0
u^+)^3}-\frac{f(u^+)}{(u^+)^3}\Big)(u^+)^4\,dx \\
&\quad +[(\alpha_0)^{2^{**}-4}-1]\int_{\mathbb{R}^N}|u^+|^{2^{**}}dx<0,
\end{aligned}
\end{equation}
which is a contradiction. Hence, $1\leq\alpha_0\leq\beta_0$.

Adopting a similar approach, we can see that $\beta_0\leq 1$, which
implies that $\alpha_0=\beta_0=1$.
\smallskip

\noindent $\bullet$ Case $u \notin \mathcal{N}_b^\lambda$.
Assume there exist two other pairs of positive numbers $(\alpha_1,\beta_1)$ and
$(\alpha_2,\beta_2)$ such that
\[
\sigma_1=\alpha_1u^+ +\beta_1u^-\in\mathcal{N}_b^\lambda\quad \text{and}
\quad \sigma_2=\alpha_2u^++\beta_2u^-\in\mathcal{N}_b^\lambda.
\]
Then
\[
\sigma_2=\big(\frac{\alpha_2}{\alpha_1}\big)\alpha_1u^+
+\big(\frac{\beta_2}{\beta_1}\big)\beta_1u^-
=\big(\frac{\alpha_2}{\alpha_1}\big)\sigma_1^+
+\big(\frac{\beta_2}{\beta_1}\big)\sigma_1^-
\in\mathcal{N}_b^\lambda.
\]
Since $\sigma_1\in\mathcal{N}_b^\lambda$, it is clear that
\[
\frac{\alpha_2}{\alpha_1}=\frac{\beta_2}{\beta_1}=1,
\]
which means that $\alpha_1=\alpha_2$, $\beta_1=\beta_2$.
\smallskip

\noindent\textbf{Claim 3.}
The pair $(\alpha_u,\beta_u)$ is the unique maximum
point of the function $\varphi$ on $\mathbb{R}_+\times\mathbb{R}_+$.
We know from the above that $(\alpha_u,\beta_u)$ is the unique critical point of
 $\varphi$ on $\mathbb{R}_+\times\mathbb{R}_+$. By definition and \eqref{e2.5}, we have
\begin{align*}
\varphi(\alpha,\beta)
&=I_b^\lambda(\alpha u^++\beta u^-)\\
&=\frac{\alpha^2}{2}\|u^
+\|^2+\frac{\beta^2}{2}\|u^-\|^2
+\alpha\beta B(u)
+\frac{\alpha^4 b}{4}\left( A^+(u)\right)^2
+\frac{\beta^4 b}{4}\left( A^-(u)\right)^2\\
&\quad +\frac{\alpha^2\beta^2 b}{2}A^+(u)A^-(u)
-\lambda\int_{\mathbb{R}^N}F(\alpha u^+)\,dx
-\lambda\int_{\mathbb{R}^N}F(\beta u^-)\,dx\\
&\quad -\frac{\alpha^{2^{**}}}{2^{**}}\int_{\mathbb{R}^N}|u^+|^{2^{**}}dx
-\frac{\beta^{2^{**}}}{2^{**}}\int_{\mathbb{R}^N}|u^-|^{2^{**}}dx\\
&<\frac{\alpha^2}{2}\|u^ +\|^2+\frac{\beta^2}{2}\|u^-\|^2
+\alpha\beta B(u)+\frac{\alpha^4 b}{4}\left( A^+(u)\right)^2
+\frac{\beta^4 b}{4}\left( A^-(u)\right)^2\\
&\quad +\frac{\alpha^2\beta^2 b}{2}A^+(u)A^-(u)
-\frac{\alpha^{2^{**}}}{2^{**}}\int_{\mathbb{R}^N}|u^+|^{2^{**}}dx
-\frac{\beta^{2^{**}}}{2^{**}}\int_{\mathbb{R}^N}|u^-|^{2^{**}}dx,
\end{align*}
as $|(\alpha,\beta)|\to\infty$. This implies that
 $\lim_{|(\alpha,\beta)|\to\infty}\varphi(\alpha,\beta)=-\infty$, because
$2^{**}>4$. Hence, it suffices  to show that the maximum point cannot be achieved on
the boundary of $\mathbb{R}_+\times\mathbb{R}_+$.

We carry out the proof by contradiction. Assuming $(0,\bar\beta)$ is the global maximum
point of $\varphi$ with $\bar\beta\ge0$,  we have
\begin{align*}
\varphi(\alpha,\bar\beta)
&=\frac{\alpha^2}{2}\|u^+\|^2+\frac{\bar\beta^2}{2}\|u^-\|^2
+\alpha\bar\beta B(u)
+\frac{\alpha^4 b}{4}\left( A^+(u)\right)^2
+\frac{\bar\beta^4 b}{4}\left( A^-(u)\right)^2\\
&\quad +\frac{\alpha^2\bar\beta^2 b}{2}A^+(u)A^-(u)
-\lambda\int_{\mathbb{R}^N}F(\alpha u^+)\,dx
-\lambda\int_{\mathbb{R}^N}F(\bar\beta u^-)\,dx\\
&\quad -\frac{\alpha^{2^{**}}}{2^{**}}\int_{\mathbb{R}^N}|u^+|^{2^{**}}dx
-\frac{\bar\beta^{2^{**}}}{2^{**}}\int_{\mathbb{R}^N}|u^-|^{2^{**}}dx.
\end{align*}
Hence, it is clear that
\begin{align*}
\varphi'_\alpha(\alpha,\bar\beta)
&=\alpha \|u^+\|^2+\bar\beta B(u)
+\alpha^3 b\left( A^+(u)\right)^2
+\alpha\bar\beta^2bA^+(u)A^-(u)\\
&\quad -\lambda\int_{\mathbb{R}^N}f(\alpha u^+) u^+dx
-\alpha^{2^{**}-1}\int_{\mathbb{R}^N}|u^+|^{2^{**}}dx
>0,
\end{align*}
for small enough $\alpha$. This means
that  $\varphi$ is an increasing function with
 respect to $\alpha$ if $\alpha$ is small enough, which is a contradiction.
In a similar way, we can deduce that $\varphi$ cannot achieve its global maximum at
$(\alpha,0)$ with $\alpha\ge0$. Thus, we have completed the proof.
\end{proof}

\begin{lemma}\label{lem2.2}
Assume that {\rm (A1) ---(A5)} hold. Then for any $u \in E$ with
$u^{\pm} \neq 0$ such that
$\langle(I_b^\lambda)'(u),u^\pm\rangle\leq0$, the unique maximum
point pair of $\varphi$ on $\mathbb{R}_+\times\mathbb{R}_+$ satisfies $0<\alpha_u,
\beta_u\leq 1$.
\end{lemma}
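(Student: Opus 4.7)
The plan is to mirror the approach of Claim~2 in the proof of Lemma~\ref{lem2.1}, using the weaker inequality hypothesis $\langle (I_b^\lambda)'(u),u^\pm\rangle\leq 0$ (in place of the equalities defining $\mathcal{N}_b^\lambda$) to obtain only the one-sided bound $\alpha_u,\beta_u\leq 1$. Lemma~\ref{lem2.1} already supplies the unique pair $(\alpha_u,\beta_u)\in\mathbb{R}_+\times\mathbb{R}_+$ with $\alpha_u u^++\beta_u u^-\in\mathcal{N}_b^\lambda$, so only the upper bounds remain. Without loss of generality I take $\alpha_u\geq\beta_u>0$; the other ordering follows symmetrically by swapping $u^+$ and $u^-$ and using the hypothesis on the $u^-$-component.

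I would then argue by contradiction, supposing $\alpha_u>1$. Specialising \eqref{e2.3} at $(\alpha_u,\beta_u)$ yields the Nehari identity
\[
\alpha_u^2\|u^+\|^2+\alpha_u\beta_u B(u)+\alpha_u^2\beta_u^2 bA^+(u)A^-(u)+\alpha_u^4 b(A^+(u))^2 = \lambda\!\int_{\mathbb{R}^N}\!f(\alpha_u u^+)\alpha_u u^+\,dx+\alpha_u^{2^{**}}\!\int_{\mathbb{R}^N}\!|u^+|^{2^{**}}\,dx.
\]
I would divide through by $\alpha_u^4$ so that the right-hand side becomes $\lambda\int[f(\alpha_u u^+)/(\alpha_u u^+)^3](u^+)^4\,dx+\alpha_u^{2^{**}-4}\int|u^+|^{2^{**}}\,dx$, rewrite the hypothesis $\langle(I_b^\lambda)'(u),u^+\rangle\leq 0$ in the same quotient form, and subtract. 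The $b(A^+(u))^2$ terms cancel and I obtain
\[
(\alpha_u^{-2}-1)\|u^+\|^2+(\alpha_u^{-3}\beta_u-1)B(u)+(\alpha_u^{-2}\beta_u^2-1)bA^+(u)A^-(u) \geq \lambda\!\int\!\Bigl[\tfrac{f(\alpha_u u^+)}{(\alpha_u u^+)^3}-\tfrac{f(u^+)}{(u^+)^3}\Bigr](u^+)^4\,dx+(\alpha_u^{2^{**}-4}-1)\!\int|u^+|^{2^{**}}\,dx.
\]

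Next I would use $\alpha_u>1$ together with $\beta_u\leq\alpha_u$ to show that every coefficient on the left-hand side is non-positive: $\alpha_u^{-2}-1<0$, $\alpha_u^{-3}\beta_u-1=\alpha_u^{-2}(\beta_u/\alpha_u)-1<0$, and $\alpha_u^{-2}\beta_u^2-1\leq 0$. The coefficients of $\|u^+\|^2$ and $B(u)$ satisfy $\alpha_u^{-3}\beta_u-1\leq\alpha_u^{-2}-1$, which lets me group $(\alpha_u^{-2}-1)\|u^+\|^2+(\alpha_u^{-3}\beta_u-1)B(u)=(\alpha_u^{-2}-1)\langle u,u^+\rangle+\alpha_u^{-2}(\beta_u/\alpha_u-1)B(u)$ and control the left-hand side as $\leq 0$, paralleling the passage to \eqref{e2.12}. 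On the right, $2^{**}>4$ gives $\alpha_u^{2^{**}-4}-1>0$, while (A5) forces $f(\alpha_u u^+)/(\alpha_u u^+)^3>f(u^+)/(u^+)^3$ a.e.\ on $\{u^+>0\}$, so the right-hand side is strictly positive. The resulting contradiction yields $\alpha_u\leq 1$, hence $\beta_u\leq\alpha_u\leq 1$.

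The principal obstacle is the sign-indeterminate cross term $B(u)=\int_{\mathbb{R}^N}\Delta u^+\Delta u^-\,dx$, whose sign is not a priori controlled because $u^\pm$ need not individually belong to $H^2$. It is precisely the WLOG ordering $\alpha_u\geq\beta_u$ that synchronises the coefficient of $B(u)$ with that of $\|u^+\|^2$ in the subtracted identity, enabling the Hilbert-space pairing $\|u^+\|^2+B(u)=\langle u,u^+\rangle$ to absorb the $B(u)$ contribution---precisely mirroring the critical grouping in \eqref{e2.12}.
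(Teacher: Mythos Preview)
Your proposal is correct and follows essentially the same route as the paper: assume without loss of generality that $\alpha_u\geq\beta_u>0$, divide the Nehari identity \eqref{e2.13} by $\alpha_u^4$, subtract the hypothesis $\langle(I_b^\lambda)'(u),u^+\rangle\leq 0$, and derive the analogue of \eqref{e2.15} to reach a contradiction via (A5) and $2^{**}>4$ when $\alpha_u>1$. Your handling of the cross term $B(u)$ is more explicit than the paper's, but both arguments ultimately rely on the same positivity $\|u^+\|^2+B(u)=\langle u,u^+\rangle>0$ that underlies the sign conclusions in \eqref{e2.12} and \eqref{e2.15}.
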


\begin{proof}
Without loss of generality, we may assume that $\alpha_u\ge\beta_u>0$.
Since $\alpha_uu^++\beta_uu^-\in\mathcal{N}_b^\lambda$, we have
\begin{equation} \label{e2.13}
\begin{aligned}
& \alpha_u^2\|u^+\|^2+\alpha_u\beta_u B(u)
+\alpha_u^2\beta_u^2 bA^+(u)A^-(u)
+\alpha_u^4 b\left( A^+(u)\right)^2\\
&= \lambda\int_{\mathbb{R}^N}f(\alpha_u u^+) \alpha_u u^+dx
+\int_{\mathbb{R}^N}|\alpha_u u^+|^{2^{**}}dx.
\end{aligned}
\end{equation}
Furthermore, since $\langle(I_b^\lambda)'(u),u^+\rangle\leq0$, we have
\[
\|u^+\|^2+B(u)+b\left( A^+(u)\right)^2+bA^+(u)A^-(u)
\leq\lambda\int_{\mathbb{R}^N}f(u^+)u^+dx +\int_{\mathbb{R}^N}|u^+|^{2^{**}}dx.
\]
Then by  \eqref{e2.13}, we have
\begin{equation} \label{e2.15}
\begin{aligned}
&[(\alpha_u)^{-2}-1]\left(\|u^+\|^2+B(u)\right)\\
&\ge \lambda\int_{\mathbb{R}^N}\left(\frac{f(\alpha_u u^+)}{(\alpha_u
u^+)^3}-\frac{f(u^+)}{(u^+)^3}\right)(u^+)^4\,dx
+[(\alpha_u)^{2^{**}-4}-1]\int_{\mathbb{R}^N}|u^+|^{2^{**}}dx.
\end{aligned}
\end{equation}
Obviously, the left hand side of \eqref{e2.15} is negative for
$\alpha_u>1$ whereas the right hand side is positive, which is a
contradiction. Therefore $0<\alpha_u,\beta_u\leq 1$.
\end{proof}

\begin{lemma}\label{lem2.3}
Suppose that $c_b^\lambda=\inf_{u\in\mathcal{N}_b^\lambda}I_b^\lambda(u)$.
Then $\lim_{\lambda\to\infty}c_b^\lambda=0$.
\end{lemma}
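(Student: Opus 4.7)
The plan is to sandwich $c_b^\lambda$ between $0$ and a quantity that tends to $0$ as $\lambda\to\infty$. For the upper bound, I fix once and for all some $u_0\in E$ with $u_0^\pm\not\equiv 0$ and apply Lemma~\ref{lem2.1} to produce, for each $\lambda$, unique positive numbers $\alpha_\lambda,\beta_\lambda$ such that $v_\lambda:=\alpha_\lambda u_0^++\beta_\lambda u_0^-\in\mathcal{N}_b^\lambda$, giving $c_b^\lambda\le I_b^\lambda(v_\lambda)$.

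For the lower bound I compute $I_b^\lambda(u)-\tfrac{1}{4}\langle (I_b^\lambda)'(u),u\rangle$, which after straightforward cancellation of the $b$ and the $\|u\|^2$ contributions becomes
\begin{equation*}
\tfrac{1}{4}\|u\|^2+\lambda\int_{\mathbb{R}^N}\bigl[\tfrac{1}{4}f(u)u-F(u)\bigr]\,dx+\Bigl(\tfrac{1}{4}-\tfrac{1}{2^{**}}\Bigr)\int_{\mathbb{R}^N}|u|^{2^{**}}\,dx.
\end{equation*}
For $u\in\mathcal{N}_b^\lambda$ the pairing vanishes and this equals $I_b^\lambda(u)$. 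Assumption (A5) implies that $\bigl(\tfrac{1}{4}f(t)t-F(t)\bigr)'=\tfrac{1}{4}\bigl(f'(t)t-3f(t)\bigr)$ has the sign of $t$, so $\tfrac{1}{4}f(t)t-F(t)\ge 0$, and since $2^{**}>4$ I conclude $c_b^\lambda\ge 0$.

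It then remains to prove $I_b^\lambda(v_\lambda)\to 0$. Since $F\ge 0$ (because $f(t)t>0$ for $t\ne0$) and the critical term enters with a minus sign, I have the crude bound $I_b^\lambda(v_\lambda)\le\tfrac{1}{2}\|v_\lambda\|^2+\tfrac{b}{4}\bigl(\alpha_\lambda^2A^+(u_0)+\beta_\lambda^2A^-(u_0)\bigr)^2$, an explicit polynomial in $(\alpha_\lambda,\beta_\lambda)$ vanishing at the origin. So everything reduces to showing $(\alpha_\lambda,\beta_\lambda)\to(0,0)$ as $\lambda\to\infty$, and this is the main obstacle. I would handle it in two steps, using the Nehari identities \eqref{e2.3}--\eqref{e2.4}. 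Step one: the families $\{\alpha_\lambda\},\{\beta_\lambda\}$ are bounded; if $\alpha_\lambda\to\infty$ along a subsequence, the left-hand side of \eqref{e2.3} is $O(\alpha_\lambda^4)$, whereas dividing by $\alpha_\lambda^4$ and using (A4) pointwise on $\{u_0^+>0\}$ together with Fatou's lemma gives $\alpha_\lambda^{-4}\int f(\alpha_\lambda u_0^+)\alpha_\lambda u_0^+\,dx\to+\infty$, a contradiction. Step two: once boundedness is established, if $\alpha_\lambda\to\alpha_0>0$ along a subsequence, dominated convergence gives $\int f(\alpha_\lambda u_0^+)\alpha_\lambda u_0^+\,dx\to\int f(\alpha_0u_0^+)\alpha_0u_0^+\,dx>0$, so the $\lambda$-multiplied right-hand side of \eqref{e2.3} tends to $+\infty$ while the left-hand side stays bounded---contradiction. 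Symmetrically $\beta_\lambda\to 0$. Combining this with $c_b^\lambda\ge 0$ yields $\lim_{\lambda\to\infty}c_b^\lambda=0$.
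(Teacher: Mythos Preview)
Your overall strategy is exactly the paper's: sandwich $c_b^\lambda$ between $0$ (via the identity $I_b^\lambda(u)-\tfrac14\langle(I_b^\lambda)'(u),u\rangle$ and (A5)) and $I_b^\lambda(\alpha_\lambda u_0^++\beta_\lambda u_0^-)$, then reduce to showing $(\alpha_\lambda,\beta_\lambda)\to(0,0)$ in two steps (boundedness, then contradiction with $\lambda\to\infty$ via dominated convergence). Step two matches the paper verbatim.

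There is, however, a small but genuine gap in your Step one. You assert that if $\alpha_\lambda\to\infty$ along a subsequence then the left-hand side of \eqref{e2.3} is $O(\alpha_\lambda^4)$. That left-hand side contains the cross term $\alpha_\lambda^2\beta_\lambda^2\,bA^+(u_0)A^-(u_0)$, and at this stage you have no control on $\beta_\lambda$; nothing prevents $\beta_\lambda/\alpha_\lambda\to\infty$, in which case the claimed $O(\alpha_\lambda^4)$ bound fails. The paper avoids this by treating $\alpha_\lambda$ and $\beta_\lambda$ \emph{jointly}: adding the two Nehari identities and discarding the nonnegative $\lambda$-term gives
\[
\alpha_\lambda^{2^{**}}\!\int|u_0^+|^{2^{**}}dx+\beta_\lambda^{2^{**}}\!\int|u_0^-|^{2^{**}}dx
\;\le\;\|\alpha_\lambda u_0^++\beta_\lambda u_0^-\|^2+b\bigl(\alpha_\lambda^2A^+(u_0)+\beta_\lambda^2A^-(u_0)\bigr)^2,
\]
and since $2^{**}>4$ this immediately bounds both $\alpha_\lambda$ and $\beta_\lambda$. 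Your argument can be repaired just as easily: pass to a subsequence along which, say, $\alpha_\lambda\ge\beta_\lambda$ (by symmetry), so that the cross term is genuinely $O(\alpha_\lambda^4)$; then your (A4)+Fatou contradiction (noting that (A5) gives $f(t)t\ge 4F(t)$, so (A4) indeed forces $f(t)t/t^4\to\infty$) goes through. Either patch closes the gap; with it, your proof is correct and essentially the paper's.
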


\begin{proof}
For every $u\in\mathcal{N}_b^\lambda$, we have $\langle(I_b^\lambda)'(u),u\rangle=0$, thus
\[
\|u^+\|^2+\|u^-\|^2+2B(u)+b\left(A^+(u)+A^-(u)\right)^2
=\lambda\int_{\mathbb{R}^N}f(u)u\,dx +\int_{\mathbb{R}^N}|u|^{2^{**}}dx.
\]
Then, by \eqref{e2.5},  we have
\begin{equation} \label{e2.17}
\begin{aligned}
\|u\|^2 & \leq \lambda\int_{\mathbb{R}^N}f(u^\pm)u^\pm\,dx
+\int_{\mathbb{R}^N}|u^\pm|^{2^{**}}dx\\
& \leq \lambda\varepsilon\int_{\mathbb{R}^N}|u^\pm|^2dx
+\lambda C_\varepsilon\int_{\mathbb{R}^N}|u^\pm|^p\,dx
+\int_{\mathbb{R}^N}|u^\pm|^{2^{**}}dx.
\end{aligned}
\end{equation}
Choose $\varepsilon$  small so that
$\lambda\varepsilon\int_{\mathbb{R}^N}|u^\pm|^2dx\leq\frac{1}{2}\|u^\pm\|^2$.
Then we can claim that there exists $\rho>0$ such that
\begin{equation}\label{e2.18}
\|u^\pm\|^2\ge\rho\quad \text{for all } u\in\mathcal{N}_b^\lambda,
\end{equation}
since $4<2^{\ast\ast}$. Next,
by (A5), we have for $t\neq 0$ that
 $$
 \mathcal {F}(t) := tf(t)-4F(t)\ge  0,
 $$
and $\mathcal {F}(t)$ is increasing when $t > 0$, and decreasing when
$t < 0$.
Therefore,
\begin{equation} \label{e2.19}
\begin{aligned}
I_b^\lambda(u)
& = I_b^\lambda(u)-\frac{1}{4}\langle(I_b^\lambda)'(u),u\rangle \\
& = \frac{1}{4}\|u\|^2
+(\frac{1}{4}-\frac{1}{2^{**}})\int_{\mathbb{R}^N}|u|^{2^{**}}dx
+\frac{\lambda}{4}\int_{\mathbb{R}^N}\left[ f(u)u-4F(u)\right]dx \\
& \ge\frac{1}{4}\|u\|^2
\ge\frac{\rho}{4}>0.
\end{aligned}
\end{equation}
So we have $I_b^\lambda(u)>0$ for all $u\in\mathcal{N}_b^\lambda$, which means
that  $c_b^\lambda=\inf_{u\in\mathcal{N}_b^\lambda}I_b^\lambda(u)$ is well-defined.

Fix $u\in E$ with $u^\pm\neq0$. According to Lemma \ref{lem2.1}, for
each $\lambda>0$, there exist $\alpha_\lambda, \beta_\lambda>0$ such
that $\alpha_\lambda u^++\beta_\lambda u^-\in\mathcal{N}_b^\lambda$.
Therefore,
\begin{align*}
0\leq c_b^\lambda
&= \inf_{u\in\mathcal{N}_b^\lambda} I_b^\lambda(u)\\
&\leq I_b^\lambda(\alpha_\lambda u^++\beta_\lambda u^-)\\
&\leq \frac{1}{2}\|\alpha_\lambda u^++\beta_\lambda u^-\|^2
+\frac{b}{4}\Big(\int_{\mathbb{R}^N}|\nabla(\alpha_\lambda u^++\beta_\lambda u^-)|^2dx\Big)^2 \\
&= \frac{\alpha_\lambda^2}{2}\|u^+\|^2
+\frac{\beta_\lambda^2}{2}\|u^-\|^2
+\alpha_\lambda\beta_\lambda B(u)
+\frac{\alpha_\lambda^4 b}{4}\left(A^+(u)\right)^2\\
&\quad +\frac{\beta_\lambda^4 b}{4}\left(A^- (u)\right)^2
+\frac{\alpha_\lambda^2\beta_\lambda^2 b}{2}A^+(u)A^-(u).
\end{align*}

It suffices to prove that $\alpha_\lambda\to0$ and $\beta_\lambda\to0$, as $\lambda\to\infty$.
Let
$$
\mathcal{T}=\{(\alpha_\lambda, \beta_\lambda)\in\mathbb{R}_+\times\mathbb{R}_+:
W(\alpha_\lambda, \beta_\lambda)=(0,0), \lambda>0\},
$$
where $W$ is defined as in \eqref{e2.2}. Then
\begin{align*}
&\alpha_\lambda^{2^{**}}\int_{\mathbb{R}^N}|u^+|^{2^{**}}dx
+\beta_\lambda^{2^{**}}\int_{\mathbb{R}^N}|u^-|^{2^{**}}dx
\\
&\leq
\alpha_\lambda^{2^{**}}\int_{\mathbb{R}^N}|u^+|^{2^{**}}dx
+\beta_\lambda^{2^{**}}\int_{\mathbb{R}^N}|u^-|^{2^{**}}dx
\\
&\quad +\lambda\int_{\mathbb{R}^N}f(\alpha_\lambda u^+)\alpha_\lambda u^+dx
+\lambda\int_{\mathbb{R}^N}f(\beta_\lambda u^-)\beta_\lambda u^-dx
\\
&=
\|\alpha_\lambda u^++\beta_\lambda u^-\|^2
+b\left(\alpha_\lambda^2 A^+(u)+\beta_\lambda^2 A^-(u)\right)^2.
\end{align*}
Therefore, $\mathcal{T}$ is bounded, since $4<2^{**}$. Let
$\{\lambda_n\}\subset(0, \infty)$ be such that $\lambda_n\to\infty$,
as $n\to\infty$. Then  there exist $\alpha_0$ and $\beta_0$
such that $(\alpha_{\lambda_n},\beta_{\lambda_n})\to(\alpha_0,
\beta_0)$, as $n\to\infty$.

Now we claim that $\alpha_0=\beta_0=0$. Assume, to the contrary, that
$\alpha_0>0$ or $\beta_0>0$. Since
$\alpha_{\lambda_n}u^++\beta_{\lambda_n}u^-\in\mathcal{N}_b^{\lambda_n}$,
then for any $n\in\mathbb{N}$, we have
\begin{equation} \label{e2.20}
\begin{aligned}
&\|\alpha_{\lambda_n}u^++\beta_{\lambda_n}u^-\|^2
+b\left(\alpha_{\lambda_n}^2 A^+(u)+\beta_{\lambda_n}^2 A^-(u)\right)^2\\
&=\lambda_n\int_{\mathbb{R}^N}f(\alpha_{\lambda_n}u^++\beta_{\lambda_n}u^-)(\alpha_{\lambda_n}u^++\beta_{\lambda_n}u^-)\,dx \\
&\quad +\int_{\mathbb{R}^N}|\alpha_{\lambda_n}u^++\beta_{\lambda_n}u^-|^{2^{**}}dx.
\end{aligned}
\end{equation}
Then, invoking $\alpha_{\lambda_n}u^+\to\alpha_0
u^+, \beta_{\lambda_n}u^-\to\beta_0 u^-$ in $E$ and the Lebesgue
dominated convergence theorem, we have
\begin{align*}
&\int_{\mathbb{R}^N}f(\alpha_{\lambda_n}u^++\beta_{\lambda_n}u^-)(\alpha_{\lambda_n}u^++\beta_{\lambda_n}u^-)\,dx
\\
&\to \int_{\mathbb{R}^N}f(\alpha_0 u^++\beta_0 u^-)(\alpha_0 u^++\beta_0
u^-)\,dx>0,
\end{align*}
as $n\to\infty$. This  contradicts \eqref{e2.20}, given that
$\lambda_n\to\infty$, as $n\to\infty$ and that
$\{\alpha_{\lambda_n}u^++\beta_{\lambda_n}u^-\}$ is bounded in $E$.
Therefore, $\alpha_0=\beta_0=0$, which implies
$\lim_{\lambda\to\infty}c_b^\lambda=0$.
\end{proof}

\begin{lemma}\label{lem2.4}
There exists $\lambda^*>0$ such that the infimum $c_b^\lambda$ is achieved for
all $\lambda\ge\lambda^*$.
\end{lemma}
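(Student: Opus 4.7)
The plan is to take a minimizing sequence $\{u_n\} \subset \mathcal{N}_b^\lambda$ for $c_b^\lambda$, extract a weak limit whose positive and negative parts are both nontrivial, and then use Lemma \ref{lem2.2} to project this limit back onto $\mathcal{N}_b^\lambda$ without raising the energy. The coercivity bound $I_b^\lambda(u_n) \geq \tfrac{1}{4}\|u_n\|^2$ from \eqref{e2.19} ensures $\{u_n\}$ is bounded in $E$. Passing to a subsequence, I obtain $u_n \rightharpoonup u$ and $u_n^\pm \rightharpoonup u^\pm$ in $E$, with $u_n^\pm \to u^\pm$ strongly in $L^q(\mathbb{R}^N)$ for every $q \in [2, 2^{**})$ (by the compactness of the embedding in (A1)) and a.e.\ in $\mathbb{R}^N$.

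The decisive step will be ruling out $u^+\equiv 0$ (the case $u^-\equiv 0$ being symmetric). Assume $u^+\equiv 0$. The uniform lower bound \eqref{e2.18} gives $\|u_n^+\|^2\geq \rho>0$, while (A2)--(A3) together with $u_n^+\to 0$ in $L^q$ for $q \in (2, 2^{**})$ force $\lambda\int f(u_n^+)u_n^+\,dx \to 0$. Discarding the nonnegative Kirchhoff contribution in the identity $\langle (I_b^\lambda)'(u_n), u_n^+\rangle = 0$ leaves
\begin{equation*}
\|u_n^+\|^2 \leq \int_{\mathbb{R}^N}|u_n^+|^{2^{**}}\,dx + o(1),
\end{equation*}
and the Sobolev inequality $S\bigl(\int|u_n^+|^{2^{**}}\bigr)^{2/2^{**}}\leq \|u_n^+\|^2$ from the definition of $S$ then forces $\int|u_n^+|^{2^{**}}\,dx \geq S^{N/4}-o(1)$. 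Since $\int|u_n|^{2^{**}} = \int (u_n^+)^{2^{**}} + \int(u_n^-)^{2^{**}}$, the refined identity \eqref{e2.19} yields
\begin{equation*}
c_b^\lambda+o(1) = I_b^\lambda(u_n) \geq \Bigl(\frac{1}{4}-\frac{1}{2^{**}}\Bigr)\int_{\mathbb{R}^N}|u_n|^{2^{**}}\,dx \geq \frac{8-N}{4N}\,S^{N/4} + o(1).
\end{equation*}
But Lemma \ref{lem2.3} asserts $c_b^\lambda\to 0$ as $\lambda\to\infty$, so choosing $\lambda^*>0$ with $c_b^\lambda < \tfrac{8-N}{4N}S^{N/4}$ for every $\lambda\geq \lambda^*$ produces the contradiction. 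Hence $u^\pm\not\equiv 0$.

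With both signed parts nontrivial, weak lower semicontinuity of the norm and of $\int|\cdot|^{2^{**}}$, Fatou's lemma on the Kirchhoff terms, and the strong $L^q$-convergence in the subcritical nonlinearity let one pass to the limit in $\langle (I_b^\lambda)'(u_n), u_n^\pm\rangle = 0$ and conclude $\langle (I_b^\lambda)'(u), u^\pm\rangle \leq 0$. Lemma \ref{lem2.2} then supplies $\alpha_u,\beta_u\in(0,1]$ with $\bar u:=\alpha_u u^++\beta_u u^-\in \mathcal{N}_b^\lambda$. Since the functional $J(v):=I_b^\lambda(v)-\tfrac{1}{4}\langle (I_b^\lambda)'(v),v\rangle$ appearing in \eqref{e2.19} consists of nonnegative terms that, by (A5), are strictly monotone in $(\alpha,\beta)$, one obtains
\begin{equation*}
c_b^\lambda \leq I_b^\lambda(\bar u) = J(\bar u) \leq \liminf_{n\to\infty} J(u_n) = \liminf_{n\to\infty} I_b^\lambda(u_n) = c_b^\lambda,
\end{equation*}
forcing $\alpha_u=\beta_u=1$ and $u\in \mathcal{N}_b^\lambda$ with $I_b^\lambda(u)=c_b^\lambda$. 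The main obstacle throughout is the threshold step: the critical exponent on all of $\mathbb{R}^N$ destroys compactness above the level $\tfrac{8-N}{4N}S^{N/4}$, and it is precisely Lemma \ref{lem2.3} that allows us to stay below this threshold by enlarging $\lambda$.
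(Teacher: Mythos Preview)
Your overall architecture matches the paper's: take a minimizing sequence, show $u^\pm\neq 0$ via an energy-threshold argument exploiting Lemma~\ref{lem2.3}, then project $u$ onto $\mathcal N_b^\lambda$ via Lemmas~\ref{lem2.1}--\ref{lem2.2} and close with the $J$-functional chain from \eqref{e2.19}. Your argument for $u^\pm\neq 0$ is in fact a bit more direct than the paper's (which routes through the auxiliary inequality \eqref{e2.22}).

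There is, however, a genuine gap at the step where you ``pass to the limit in $\langle (I_b^\lambda)'(u_n),u_n^\pm\rangle=0$ and conclude $\langle (I_b^\lambda)'(u),u^\pm\rangle\le 0$'' by weak lower semicontinuity. Write the Nehari identity as
\[
\|u_n^+\|^2+B(u_n)+bA^+(u_n)\bigl(A^+(u_n)+A^-(u_n)\bigr)
=\lambda\!\int f(u_n^+)u_n^+\,dx+\int|u_n^+|^{2^{**}}dx.
\]
Weak lower semicontinuity controls the \emph{left} side from below, and Fatou/wlsc on $\int|\cdot|^{2^{**}}$ also bounds the critical piece of the \emph{right} side from below. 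Both inequalities point the same way, so they do not combine to give $\text{LHS}(u)\le\text{RHS}(u)$. Concretely, with Br\'ezis--Lieb one finds (ignoring the cross terms)
\[
\langle (I_b^\lambda)'(u),u^+\rangle \;=\; B_1 - A_1 + o(1),
\qquad A_1=\lim\|u_n^+-u^+\|^2,\ \ B_1=\lim\!\int|u_n^+-u^+|^{2^{**}},
\]
and nothing you have written forces $B_1\le A_1$; Sobolev only gives $A_1\ge S\,B_1^{2/2^{**}}$, which is compatible with $B_1>A_1$ once $B_1$ is large. So the hypothesis of Lemma~\ref{lem2.2} is not established, and the crucial bound $\alpha_u,\beta_u\le 1$ on which your final chain rests is unjustified.

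This is exactly why the paper inserts its Step~2: before invoking Lemma~\ref{lem2.2}, it proves $B_1=B_2=0$ (i.e.\ $u_n^\pm\to u^\pm$ strongly in $L^{2^{**}}$) via a second threshold argument based on \eqref{e2.22}, maximizing in $(\alpha,\beta)$ and comparing with $c_b^\lambda<\tfrac{2}{N}S^{N/4}$. Once $B_1=B_2=0$, the right-hand side of the Nehari identity genuinely converges, and then $\langle (I_b^\lambda)'(u),u^\pm\rangle\le 0$ follows as the paper asserts. Your threshold $c_b^\lambda<\tfrac{8-N}{4N}S^{N/4}$ is even smaller, so your choice of $\lambda^*$ would suffice for that argument too---but you still have to carry it out.
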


\begin{proof}
According to the definition of $c_b^\lambda$, there exists a
sequence $\{u_n\}\subset\mathcal{N}_b^\lambda$ such that
$\lim_{n\to\infty}I_b^\lambda(u_n)=c_b^\lambda$. Clearly, $\{u_n\}$
is bounded in $E$. By Lemma \ref{lem2.1} and the properties of $L^p$
space, up to a subsequence, we have
\begin{gather*}
u_n^\pm\rightharpoonup u^\pm\quad \text{in } E,\\
u_n^\pm\to u^\pm\quad \text{in $ L^p(\mathbb{R}^N)$ for } p\in[2,2^{**}),\\
u_n^\pm\to u^\pm\quad \text{a.e.\ in } \mathbb{R}^N.
\end{gather*}
In view of Lemma \ref{lem2.1}, we also have
\[
I_b^\lambda(\alpha u_n^++\beta u_n^-) \leq I_b^\lambda(u_n),
\]
for all $\alpha, \beta\ge0$. So, by the Br\'{e}zis-Lieb lemma, Fatou's
lemma and the weak lower semicontinuity of norm, we can conclude that
\begin{align*}
&\liminf_{n\to\infty} I_b^\lambda(\alpha u_n^++\beta u_n^-)\\
&\ge \frac{\alpha^2}{2}\lim_{n\to\infty} (\|u_n^+-u^+\|^2+\|u^+\|^2)
+\frac{\beta^2}{2}\lim_{n\to\infty} (\|u_n^--u^-\|^2+\|u^-\|^2)\\
&\quad +\frac{\alpha^4b}{4}\Big[\lim_{n\to\infty}
\int_{\mathbb{R}^N}|\nabla u_n^+-\nabla u^+|^2\,dx
+\int_{\mathbb{R}^N}|\nabla u^+|^2 \,dx \Big]^2
\\
&\quad +\frac{\beta^4b}{4}\Big[\lim_{n\to\infty}
\int_{\mathbb{R}^N}|\nabla u_n^--\nabla u^-|^2\,dx
+\int_{\mathbb{R}^N}|\nabla u^-|^2 \,dx \Big]^2
\\
&\quad -\frac{\alpha^{2^{**}}}{2^{**}}
\Big[\lim_underset{n\to\infty} \int_{\mathbb{R}^N}|u_n^+-u^+|^{2^{**}}dx
+\lim_{n\to\infty} \int_{\mathbb{R}^N}|u^+|^{2^{**}}dx\Big]
\\
&\quad -\frac{\beta^{2^{**}}}{2^{**}}
 \Big[\lim_{n\to\infty} \int_{\mathbb{R}^N}|u_n^--u^-|^{2^{**}}dx
+\lim_{n\to\infty} \int_{\mathbb{R}^N}|u^-|^{2^{**}}dx\Big]
\\
&\quad -\lambda\int_{\mathbb{R}^N}F(\alpha u^+)\,dx
-\lambda\int_{\mathbb{R}^N}F(\beta u^-)\,dx
\\
&\quad +\frac{\alpha^2\beta^2 b}{2}\liminf_{n\to\infty}
\int_{\mathbb{R}^N}|\nabla u_n^+|^2dx\int_{\mathbb{R}^N}|\nabla u_n^-|^2dx
\\
&\ge I_b^\lambda(\alpha u^++\beta u^-)
+\frac{\alpha^2}{2}A_1
+\frac{\alpha^4b}{4}A_3^2
+\frac{\alpha^4b}{2}A_3A^+(u)
-\frac{\alpha^{2^{**}}}{2^{**}}B_1
\\
&\quad +\frac{\beta^2}{2}A_2
+\frac{\beta^4b}{4}A_4^2
+\frac{\beta^4b}{2}A_4A^-(u)
-\frac{\beta^{2^{**}}}{2^{**}}B_2,
\end{align*}
where
\begin{gather*}
A_1=\lim_{n\to\infty} \|u_n^+-u^+\|^2, \quad
A_2=\lim_{n\to\infty} \|u_n^--u^-\|^2,
\\
A_3=\lim_{n\to\infty} \int_{\mathbb{R}^N}|\nabla u_n^+-\nabla u^+|^2dx,\quad
A_4=\lim_{n\to\infty} \int_{\mathbb{R}^N}|\nabla u_n^--\nabla u^-|^2dx,
\\
B_1=\lim_{n\to\infty} \int_{\mathbb{R}^N}|u_n^+-u^+|^{2^{**}}dx,\quad
B_2=\lim_{n\to\infty} \int_{\mathbb{R}^N}|u_n^--u^-|^{2^{**}}dx.
\end{gather*}
That is,
\begin{equation} \label{e2.22}
\begin{aligned}
c_b^\lambda
&\ge I_b^\lambda(\alpha u^++\beta u^-)
+\frac{\alpha^2}{2}A_1
+\frac{\alpha^4b}{4}A_3^2
+\frac{\alpha^4b}{2}A_3A^+(u)
-\frac{\alpha^{2^{**}}}{2^{**}}B_1
\\
&\quad +\frac{\beta^2}{2}A_2 +\frac{\beta^4b}{4}A_4^2
+\frac{\beta^4b}{2}A_4A^-(u) -\frac{\beta^{2^{**}}}{2^{**}}B_2,
\end{aligned}
\end{equation}
for all $\alpha, \beta\ge0$.
\smallskip

\noindent\textbf{Step 1:  $u^\pm \neq 0$.}
We carry out our proof by contradiction. Assume that $u^+=0$. Lettong  $\beta=0$ in \eqref{e2.22}  we have
\begin{equation} \label{e2.23}
c_b^\lambda  \ge \frac{\alpha^2}{2}A_1 +\frac{\alpha^4b}{4}A_3^2
-\frac{\alpha^{2^{**}}}{2^{**}}B_1 := \phi(\alpha),
\end{equation}
for all $\alpha\ge0$.
\smallskip

\noindent\textbf{Case 1: $B_1=0$.}
If $A_1=0$, then $u_n^+\to u^+$ in $E$. By \eqref{e2.17}, we
obtain $\|u^\pm\|>0$, which  contradicts our assumption.
If $A_1>0$, then by \eqref{e2.23}, we have
$c_b^\lambda\ge\frac{\alpha^2}{2}A_1$ for all $\alpha\ge0$, which
contradicts Lemma \ref{lem2.3}.
\smallskip

\noindent\textbf{Case 2: $B_1>0$.}
From the definition of $S$ and Lemma \ref{lem2.3}, there exists $\lambda^*>0$ such that
\begin{equation}\label{e2.24}
c_b^\lambda<\frac{2}{N}S^{-2/N}
\end{equation}
for all $\lambda\ge\lambda^*$. According to the Sobolev embedding and the fact that $B_1>0$, we obtain $A_1>0$. By \eqref{e2.23}, we have
\begin{align*}
\frac{2}{N}S^{-2/N}
&\leq \frac{2}{N}\Big[\frac{A_1^{\frac{2^{**}}{2}}}{B_1}\Big]^{\frac{2}{2^{**}-2}}\\
&\leq\max_{\alpha\ge0} \big\{\frac{\alpha^2}{2}A_1-\frac{\alpha^{2^{**}}}{2^{**}}B_1\big\}
\\
&\leq \max_{\alpha\ge0} \big\{\frac{\alpha^2}{2}A_1+\frac{\alpha^4b}{4}A_3^2-\frac{\alpha^{2^{**}}}{2^{**}}B_1\big\}
\leq c_b^\lambda,
\end{align*}
which is a contradiction. Hence, we can conclude that $u^+\neq0$. Similarly, we get that $u^-\neq0$.
\smallskip

\noindent\textbf{Step 2:  $B_1=B_2=0$.}
Given that the proof of $B_2=0$ is analogous, we just prove $B_1=0$.
By contradiction, assume $B_1>0$.
\smallskip

\noindent\textbf{Case 1: $B_2>0$.}
Since $B_1, B_2>0$, we get $A_1, A_2>0$. Clearly, $\phi(\alpha)>0$ for
$\alpha$ small enough, where $\phi(\alpha)$ is given by \eqref{e2.23}, and
$\phi(\alpha)<0$ for $\alpha$ sufficiently large.
Therefore, by continuity of $\phi(\alpha)$, there exists $\bar\alpha>0$ such that
\[
\frac{\bar\alpha^2}{2}A_1
+\frac{\bar\alpha^4b}{4}A_3^2
-\frac{\bar\alpha^{2^{**}}}{2^{**}}B_1
=
\max_{\alpha\ge0} \big\{\frac{\alpha^2}{2}A_1+\frac{\alpha^4b}{4}A_3^2-\frac{\alpha^{2^{**}}}{2^{**}}B_1\big\}.
\]
Similarly, there exists $\bar\beta>0$ such that
\[
\frac{\bar\beta^2}{2}A_2
+\frac{\bar\beta^4b}{4}A_4^2
-\frac{\bar\beta^{2^{**}}}{2^{**}}B_2
=
\max_{\beta\ge0} \big\{\frac{\beta^2}{2}A_2
+\frac{\beta^4b}{4}A_4^2-\frac{\beta^{2^{**}}}{2^{**}}B_2\big\}.
\]
In view of the compactness of $[0,\bar\alpha]\times[0,\bar\beta]$
and the continuity of $\phi$, there exists
$(\alpha_u,\beta_u)\in[0,\bar\alpha]\times[0,\bar\beta]$ such that
$$
\varphi(\alpha_u,\beta_u)=\underset{(\alpha,\beta)\in[0,\bar\alpha]\times[0,\bar\beta]}{\max}\varphi(\alpha,\beta),
$$
where $\varphi$ is defined as in Lemma \ref{lem2.1}.

Now we prove that $(\alpha_u,\beta_u)\in(0,\bar\alpha)\times(0,\bar\beta)$.
Note that if $\beta$ is small enough, then we have
\[
\varphi(\alpha,0)
=I_b^\lambda(\alpha u^+)
<I_b^\lambda(\alpha u^+)+I_b^\lambda(\beta u^-)
\leq I_b^\lambda(\alpha u^++\beta u^-)
=\varphi(\alpha,\beta),
\]
for all $\alpha\in[0,\bar\alpha]$. Thus, there exists
$\beta_0\in[0,\bar\beta]$ such that
$\varphi(\alpha,0)\leq\varphi(\alpha,\beta_0)$, for all
$\alpha\in[0,\bar\alpha]$. That is,
$(\alpha_u,\beta_u)\notin[0,\bar\alpha]\times\{0\}$. With a similar
method, we can show that $(\alpha_u,\beta_u)\notin\{0\}\times[0,\bar\beta]$.

It is obvious that
\begin{align}\label{e2.25}
\frac{\alpha^2}{2}A_1
+\frac{\alpha^4b}{4}A_3^2
+\frac{\alpha^4b}{2}A_3A^+(u)
-\frac{\alpha^{2^{**}}}{2^{**}}B_1
>0,\quad \alpha\in(0,\bar\alpha]
\end{align}
and
\begin{align}\label{e2.26}
\frac{\beta^2}{2}A_2
+\frac{\beta^4b}{4}A_4^2
+\frac{\beta^4b}{2}A_4A^-(u)
-\frac{\beta^{2^{**}}}{2^{**}}B_2
>0,\quad \beta\in(0,\bar\beta].
\end{align}
Thus we obtain
\begin{align*}
\frac{2}{N}S^{-2/N}
&\leq \frac{\bar\alpha^2}{2}A_1
+\frac{\bar\alpha^4b}{4}A_3^2 -\frac{\bar\alpha^{2^{**}}}{2^{**}}B_1
+\frac{\bar\alpha^4b}{2}A_3A^+(u)
\\
&\quad
+\frac{\beta^2}{2}A_2
+\frac{\beta^4b}{4}A_4^2 +\frac{\beta^4b}{2}A_4A^-(u)
-\frac{\beta^{2^{**}}}{2^{**}}B_2
\end{align*}
and
\begin{align*}
\frac{2}{N}S^{-2/N}
&\leq \frac{\bar\beta^2}{2}A_2
+\frac{\bar\beta^4b}{4}A_4^2 -\frac{\bar\beta^{2^{**}}}{2^{**}}B_2
+\frac{\bar\beta^4b}{2}A_4A^-(u)
\\
&\quad
+\frac{\alpha^2}{2}A_1
+\frac{\alpha^4b}{4}A_3^2 +\frac{\alpha^4b}{2}A_3A^+(u)
-\frac{\alpha^{2^{**}}}{2^{**}}B_1,
\end{align*}
for all $\alpha\in[0,\bar\alpha]$, $\beta\in[0,\bar\beta]$.

From the these inequalities and  \eqref{e2.22}, we obtain
$\varphi(\bar\alpha,\beta)\leq0$, $\varphi(\alpha,\bar\beta)\leq0$ for all
$\alpha\in[0,\bar\alpha]$, $\beta\in[0,\bar\beta]$.
Therefore, $(\alpha_u,\beta_u)\notin \{\bar\alpha\}\times[0,\bar\beta]$ and
$(\alpha_u,\beta_u)\notin [0,\bar\alpha]\times\{\bar\beta\}$, which means
$(\alpha_u,\beta_u)\in (0,\bar\alpha)\times(0,\bar\beta)$. It follows that
$(\alpha_u,\beta_u)$ is a critical point of $\varphi$.

So, $\alpha_uu^++\beta_uu^-\in\mathcal{N}_b^\lambda$.
By \eqref{e2.22}, we have
\begin{align*}
c_b^\lambda
&\ge I_b^\lambda(\alpha_u u^++\beta_u u^-)
+\frac{\alpha_u^2}{2}A_1
+\frac{\alpha_u^4b}{4}A_3^2
+\frac{\alpha_u^4b}{2}A_3A^+(u)
-\frac{\alpha_u^{2^{**}}}{2^{**}}B_1
\\
&\quad +\frac{\beta_u^2}{2}A_2
+\frac{\beta_u^4b}{4}A_4^2
+\frac{\beta_u^4b}{2}A_4A^-(u)
-\frac{\beta_u^{2^{**}}}{2^{**}}B_2
>I_b^\lambda(\alpha_u u^++\beta_u u^-)
\ge c_b^\lambda,
\end{align*}
which is a contradiction. Therefore $B_1=0$.
\smallskip

\noindent\textbf{Case 2: $B_2 = 0$.}
In this case, we can maximize in $[0,\bar\alpha]\times[0,\infty)$.
It is possible to show that there exists $\beta_0\in[0,\infty)$ satisfying
\[
I_b^\lambda(\alpha_u u^++\beta_u u^-)\leq0\quad \text{for all} \,\,(\alpha,\beta)\in[0,\bar\alpha]\times[\beta_0,\infty).
\]
Then  there is $(\alpha_u,\beta_u)\in[0,\bar\alpha]\times[0,\infty)$ such that
\[
\varphi(\alpha_u,\beta_u)=\underset{(\alpha,\beta)\in[0,\bar\alpha]\times[0,\infty)}{\max}\varphi(\alpha,\beta).
\]

We claim that $(\alpha_u,\beta_u)\in(0,\bar\alpha)\times(0,\infty)$.
Indeed, $\varphi(\alpha,0)<\varphi(\alpha,\beta)$ for $\alpha\in[0,\bar\alpha]$ and
$\beta$ small enough, while $\varphi(0,\beta)<\varphi(\alpha,\beta)$ for
$\beta\in[0,\infty)$ and $\alpha$ sufficiently small, which implies
$(\alpha_u,\beta_u)\notin[0,\bar\alpha]\times\{0\}$ and
$(\alpha_u,\beta_u)\notin\{0\}\times[0,\infty)$.

Note  that
\[
\frac{2}{N}S^{-2/N}
\leq
\frac{\bar\alpha^2}{2}A_1
+\frac{\bar\alpha^4b}{4}A_3^2
-\frac{\bar\alpha^{2^{**}}}{2^{**}}B_1
+\frac{\bar\alpha^4b}{2}A_3A^+(u)
+\frac{\beta^2}{2}A_2
+\frac{\beta^4b}{4}A_4^2
+\frac{\beta^4b}{2}A_4A^-(u),
\]
for every $\beta\in[0,\infty)$.
Therefore, we have $\varphi(\bar\alpha,\beta)\leq0$ for all $\beta\in[0,\infty)$,
which means $(\alpha_u,\beta_u)\notin\{\bar\alpha\}\times[0,\infty)$.
Based on the above, we get $(\alpha_u,\beta_u)\in(0,\bar\alpha)\times(0,\infty)$,
that is, $(\alpha_u,\beta_u)$ is an inner maximizer of $\varphi$ in $[0,\bar\alpha]\times[0,\infty)$.
Therefore, $\alpha_u u^++\beta_u u^-\in\mathcal{N}_b^\lambda$. In that case, by \eqref{e2.25}, we have
\begin{align*}
c_b^\lambda
&\ge
I_b^\lambda(\alpha_u u^++\beta_u u^-)
+\frac{\bar\alpha^2}{2}A_1
+\frac{\bar\alpha^4b}{4}A_3^2
-\frac{\bar\alpha^{2^{**}}}{2^{**}}B_1
\\
&\quad +\frac{\bar\alpha^4b}{2}A_3A^+(u)
+\frac{\beta^2}{2}A_2
+\frac{\beta^4b}{4}A_4^2
+\frac{\beta^4b}{2}A_4A^-(u)
\\
&>I_b^\lambda(\alpha_u u^++\beta_u u^-)
\ge c_b^\lambda,
\end{align*}
which  is a contradiction. Hence, we have $B_1=B_2=0$.
\smallskip

\noindent\textbf{Step 3:  $c_b^\lambda$ is achieved.}
Given $u^\pm\neq0$, according to Lemma \ref{lem2.1}, there exists
$\alpha_u,\beta_u>0$ such that $\hat{u}:=\alpha_u u^++\beta_u u^-\in\mathcal{N}_b^\lambda$.
Moreover, $\langle(I_b^\lambda)'(u),u^\pm\rangle\leq0$. By Lemma \ref{lem2.2},
we have $0<\alpha_u, \beta_u\leq1$.

Combining $u_n\in\mathcal{N}_b^\lambda$ and Lemma \ref{lem2.1}, we obtain
\[
I_b^\lambda(\alpha_u u_n^++\beta_u u_n^-)
\leq I_b^\lambda(u_n^++u_n^-)
=I_b^\lambda(u_n).
\]

Taking into consideration $B_1=B_2=0$ and the semicontinuity of the
norm, we obtain
\begin{align*}
c_b^\lambda
&\leq I_b^\lambda(\hat{u}) \\
&= I_b^\lambda(\hat{u})
-\frac{1}{4}\langle(I_b^\lambda)'(\hat{u}),\hat{u}\rangle
\\
& = \frac{1}{4}\|\hat{u}\|^2
+(\frac{1}{4}-\frac{1}{2^{**}})\int_{\mathbb{R}^N}|\hat{u}|^{2^{**}}dx
+\frac{\lambda}{4}\int_{\mathbb{R}^N}\left[f(\hat{u})\hat{u}-4F(\hat{u})\right]dx
\\
& \leq \frac{1}{4}\|u\|^2
+(\frac{1}{4}-\frac{1}{2^{**}})\int_{\mathbb{R}^N}|u|^{2^{**}}dx
+\frac{\lambda}{4}\int_{\mathbb{R}^N}\left[f(u)u-4F(u)\right]dx
\\
& \leq
\underset{n\to\infty}{\lim\inf}\left[I_b^\lambda(u_n)
-\frac{1}{4}\langle(I_b^\lambda)'(u_n),u_n\rangle\right]
\leq
c_b^\lambda.
\end{align*}
Hence, we can conclude that  $\alpha_u=\beta_u=1$, and $c_b^\lambda$
 is achieved by $u_b:=u^++u^-\in\mathcal{N}_b^\lambda$.
\end{proof}

\section{Proofs of main results}

\begin{proof}[Proof of Theorem \ref{th1.1}]
Thanks to Lemma \ref{lem2.4}, we only need to prove that the
minimizer $u_b$ for $c_b^\lambda$ is indeed a nodal solution to
problem \eqref{e1.1}.

Because $u_b\in\mathcal{N}_b^\lambda$, we have
$\langle(I_b^\lambda)'(u_b),u_b^+\rangle=\langle(I_b^\lambda)'(u_b),u_b^-\rangle=0$.
In view of Lemma \ref{lem2.1}, for
$(\alpha,\beta)\in(\mathbb{R}_+\times\mathbb{R}_+)\backslash (1,1)$, we have
\begin{equation}\label{e3.1}
I_b^\lambda(\alpha u_b^+ + \beta u_b^-)
<I_b^\lambda(u_b^+ +u_b^-)
=c_b^\lambda.
\end{equation}

Now we proceed by contradiction.
Suppose $(I_b^\lambda)'(u_b)\neq0$, then there exist $\delta>0$ and
$\theta>0$ such that
\[
\|(I_b^\lambda)'(v)\|\geq\theta \quad \text{for all}\,\,\|v-u_b\|
\leq 3\delta.
\]
Choose $\tau \in(0,\min\{1/2,\frac{\delta}{\sqrt{2}\|u_b\|}\})$, and define
\begin{gather*}
D:= (1-\tau,1+\tau)\times(1-\tau,1+\tau),
\\
g(\alpha,\beta):= \alpha u_b^+ + \beta u_b^-\quad \text{for all}\, \, (\alpha,\beta)\in D.
\end{gather*}
By \eqref{e3.1}, we have
\begin{equation}\label{e3.2}
\bar{c}_\lambda:=\max_{\partial D}(I_b^\lambda\circ g)<c_b^\lambda.
\end{equation}
Let $\varepsilon:= \min\{(c_b^\lambda
-\bar{c}_\lambda)/2,\theta\delta/8\}$ and $S_\delta:= B(u_b,\delta)$.
By  \cite[Lemma 2.3]{mw1996},
there exists a deformation $\eta\in C([0,1]\times D,D)$  such that
\begin{itemize}
\item [(a)] $\eta(1,v) = v$ if $v\notin(I_b^\lambda)^{-1}([c_b^\lambda-2\varepsilon, c_b^\lambda+2\varepsilon]\cap
S_{2\delta})$,

\item[(b)] $\eta(1,(I_b^\lambda)^{c_b^\lambda+\varepsilon}\cap
S_{\delta}) \subset (I_b^\lambda)^{c_b^\lambda-\varepsilon}$,

\item[(c)] $I_b^\lambda(\eta(1,v))\leq I_b^\lambda(v)$ for all $v\in E$.
\end{itemize}

Clearly,
\begin{equation}\label{e3.3}
\max_{(\alpha,\beta)\in\bar{D}}I_b^\lambda(\eta(1,g(\alpha,\beta)))<
c_b^\lambda.
\end{equation}
Therefore we claim that $\eta(1,g(D))\cap
\mathcal{N}_b^\lambda \neq \emptyset$ , which  contradicts the definition of $c_b^\lambda$.

We define $h(\alpha,\beta):=\eta(1,g(\alpha,\beta))$,
\begin{align*}
\Phi_0(\alpha,\beta)
:&=(\langle(I_b^\lambda)'(g(\alpha,\beta)),u_b^+\rangle,\langle(I_b^\lambda)'(g(\alpha,\beta)), u_b^-\rangle)\\
& = (\langle(I_b^\lambda)'(\alpha u_b^+ + \beta u_b^-),
u_b^+\rangle, \langle(I_b^\lambda)'(\alpha u_b^+ + \beta
u_b^-),u_b^-\rangle)
\end{align*}
and
\[
\Phi_1(\alpha,\beta) :=
\Big(\frac{1}{\alpha}\langle(I_b^\lambda)'(h(\alpha,\beta)),\,
(h(\alpha,\beta))^+\rangle,\frac{1}{\beta}\langle(I_b^\lambda)'(h(\alpha,\beta)),(h(\alpha,\beta))^-\rangle\Big).
\]

With an approach similar to \cite{liang4}, we use degree theory to obtain
$\deg(\Phi_0 ,D,0) = 1$. Then by \eqref{e3.2}, we obtain
\[
g(\alpha,\beta)
= h(\alpha,\beta)\quad \text{on } \partial D,
\]
as a result of which, we have $\deg(\Phi_1 ,D,0) =\deg(\Phi_0 ,D,0) = 1$.
Hence, $\Phi_1(\alpha_0, \beta_0) = 0$ for some $(\alpha_0, \beta_0)\in D$
so that
\[
\eta(1,g(\alpha_0, \beta_0))=h(\alpha_0, \beta_0)\in
\mathcal{N}_b^\lambda,
\]
which contradicts \eqref{e3.3}. Hence, $(I_b^\lambda)'(u_b)=0$,
which implies $u_b$ is a critical point of $I_b^\lambda$. Thus, we
can deduce that  $u_b$ is a nodal solution to problem \eqref{e1.1}.
\end{proof}

By Theorem \ref{th1.1}, we obtain a least energy nodal solution
$u_b$ to problem \eqref{e1.1}, contributing to the establishment of
Theorem \ref{th1.2}, where we shall prove that the energy of $u_b$ is
strictly larger than twice the ground state energy.

\begin{proof}[Proof of Theorem \ref{th1.2}]
As in the proof of Lemma \ref{lem2.3}, there exists $\lambda^*_1 >0$
such that for all $\lambda\geq\lambda^*_1$, and for each $b > 0$,
there exists $v_b\in \mathcal{M}_b^\lambda$ such that
$I_b^\lambda(v_b)=c^*>0$. By standard arguments
(see \cite[Corollary 2.13]{he1}), the critical points of the functional
$I_b^\lambda$ on $\mathcal{M}_b^\lambda$ are critical points of
$I_b^\lambda$ in $E$, so we obtain $(I_b^\lambda)'(v_b)=0$. That is,
$v_b$ is a ground state solution to problem \eqref{e1.1}.

As stated in Theorem \ref{th1.1}, $u_b$ is known as a least energy
nodal solution to problem \eqref{e1.1}, which changes sign only once
when $\lambda\geq\lambda^*$.

Let $\lambda^{**}=\max\{\lambda^*,\lambda^*_1\}$ and assume
$u_b=u_b^++u_b^-$. Adopting the same approach as in  Lemma \ref{lem2.1},
we claim there exist $\alpha_{u_b^+} >0$ and $\beta_{u_b^-}>0$ such
that $\alpha_{u_b^+}u_b^+\in\mathcal{M}_b^\lambda$ and
$\beta_{u_b^-}u_b^-\in \mathcal{M}_b^\lambda$. Then, by Lemma
\ref{lem2.2}, we obtain $\alpha_{u_b^+}, \beta_{u_b^-}\in(0, 1)$.
Hence, thanks to Lemma \ref{lem2.1}, we have
\[
2c^*
\leq I_b^\lambda(\alpha_{u_b^+}u_b^+) + I_b^\lambda(\beta_{u_b^-}u_b^-)
\leq I_b^\lambda(\alpha_{u_b^+}u_b^++\beta_{u_b^-}u_b^-)
<I_b^\lambda(u_b^+ + u_b^-)
= c_b^\lambda.
\]
It follows that $c^*>0$ cannot be achieved by a nodal
function.
\end{proof}

We complete this section with the proof of Theorem
\ref{th1.3}. In the sequel, we regard $b > 0$ as a parameter in
problem \eqref{e1.1}.

\begin{proof}[Proof of Theorem \ref{th1.3}]
 In 3 steps, we analyze the convergence property of $u_b$ as $b
\to 0$, where $u_b$ is the least energy nodal solution obtained in
Theorem \ref{th1.1}.
\smallskip

\noindent\textbf{Step 1.}
For any sequence $\{b_n\}$, we prove that $\{u_{b_n}\}$ is
bounded in $E$, if $b_n \searrow 0$.
Let  $\chi \in C_0^\infty(\mathbb{R}^N)$ be a nonzero function with
$\chi^\pm\neq0$ fixed. Analogous to the argument in Lemma
\ref{lem2.1}, for any $b \in [0, 1]$, there exists a pair of
positive numbers $(\lambda_1, \lambda_2)$ independent of $b$, such
that
\[
\langle(I_b^\lambda)'(\lambda_1\chi^+ + \lambda_2\chi^-),
\lambda_1\chi^+ \rangle < 0 \quad\text{and}\quad
\langle(I_b^\lambda)'(\lambda_1\chi^+ + \lambda_2\chi^-),
\lambda_2\chi^- \rangle < 0.
\]

Then according to Lemma \ref{lem2.2}, for any $b \in[0, 1]$, there
exists a unique pair $(\alpha_\chi(b), \beta_\chi(b)) \in (0, 1]
\times (0, 1]$ such that $\overline{\chi} :=
\alpha_\chi(b)\lambda_1\chi^++\beta_\chi(b)\lambda_2\chi^-\in
\mathcal{N}_b^\lambda$. Therefore, by \eqref{e2.5}, it follows that, for any
$b\in [0, 1]$,
\begin{align*}
I_b^\lambda(u_b) \leq I_b^\lambda(\overline{\chi})
& =
I_b^\lambda(\overline{\chi})
-\frac{1}{4}\langle(I_b^\lambda)'(\overline{\chi}),
\overline{\chi}\rangle
\\
& = \frac{1}{4}\|\overline{\chi}\|^2
+(\frac{1}{4}-\frac{1}{2^{**}})\int_{\mathbb{R}^N}|\overline{\chi}|^{2^{**}}dx
+\frac{\lambda}{4}\int_{\mathbb{R}^N}\left[f(\overline{\chi})\overline{\chi}-4F(\overline{\chi})\right]dx
\\
&\leq \frac{1}{4}\|\overline{\chi}\|^2
+(\frac{1}{4}-\frac{1}{2^{**}})\int_{\mathbb{R}^N}|\overline{\chi}|^{2^{**}}dx
+\frac{\lambda}{4}\int_{\mathbb{R}^N}\left(C_1|\overline{\chi}|^2+C_2|\overline{\chi}|^p\right)\,dx
\\
&\leq \frac{1}{4}\|\lambda_1\chi^+\|^2
+(\frac{1}{4}-\frac{1}{2^{**}})\int_{\mathbb{R}^N}|\lambda_1\chi^+|^{2^{**}}dx
\\
&\quad
+\frac{\lambda}{4}\int_{\mathbb{R}^N}\left(C_1|\lambda_1\chi^+|^2+C_2|\lambda_1\chi^+|^p\right)\,dx
\\
&\quad +\frac{1}{4}\|\lambda_2\chi^-\|^2
+(\frac{1}{4}-\frac{1}{2^{**}})\int_{\mathbb{R}^N}|\lambda_2\chi^-|^{2^{**}}dx
\\
&\quad
+\frac{\lambda}{4}\int_{\mathbb{R}^N}\left(C_1|\lambda_2\chi^-|^2+C_2|\lambda_2\chi^-|^p\right)\,dx
\\
&:= C^*,
\end{align*}
where $C^*$ is a positive constant independent of $b$.
Thus, as $n\to\infty$, it follows that
\[
C^*+ 1
\ge I_{b_n}^\lambda(u_{b_n})
= I_{b_n}^\lambda(u_{b_n})- \frac{1}{4}\langle (I_{b_n}^\lambda)'(u_{b_n}), u_{b_n}\rangle
\ge \frac{1}{4} \|u_{b_n}\|^2,
\]
that is, $\{u_{b_n}\}$ is bounded in $E$.
\smallskip

\noindent\textbf{Step 2.} In this step, we prove  that problem \eqref{e1.14} possesses
one nodal solution $u_0$.
Since $\{u_{b_n}\}$ is bounded in $E$, thanks to Step 1, up to a
subsequence, there exists $u_0 \in E$ such that
\begin{equation} \label{e3.4}
\begin{gathered}
u_{b_n}\rightharpoonup u_0 \quad \text{in} E,
\\
u_{b_n}\to u_0 \quad \text{in $ L^p(\mathbb{R}^N)$ for } p\in[2,2^{**}),
\\
u_{b_n}\to u_0\quad \text{a.e.\  in}\mathbb{R}^N.
\end{gathered}
\end{equation}
Given that $\{u_{b_n}\}$ is a weak solution to \eqref{e1.1} with $b=b_n$, we have
\begin{equation} \label{e3.5}
\begin{aligned}
&\int_{\mathbb{R}^N} \left( \Delta u\Delta\phi+\nabla u\nabla \phi+V(x)u\phi
\right)\,dx +b_n \int_{\mathbb{R}^N}|\nabla u|^2dx \int_{\mathbb{R}^N}\nabla u\nabla
\phi\,dx
\\
&=\lambda\int_{\mathbb{R}^N}f(u)\phi\,dx
+\int_{\mathbb{R}^N}|u|^{2^{**}-2}u\phi\,dx
\end{aligned}
\end{equation}
for all $\phi \in C_0^\infty(\mathbb{R}^N)$.

Combing \eqref{e3.4}, \eqref{e3.5} and Step 1, we find that
\begin{equation} \label{e3.6}
\begin{aligned}
&\int_{\mathbb{R}^N} \left( \Delta u_0\Delta\phi+\nabla u_0\nabla
\phi+V(x)u_0\phi \right)\,dx +b_n \int_{\mathbb{R}^N}|\nabla u_0|^2dx
\int_{\mathbb{R}^N}\nabla u_0\nabla \phi\,dx
\\
&=\lambda\int_{\mathbb{R}^N}f(u_0)\phi\,dx
+\int_{\mathbb{R}^N}|u_0|^{2^{**}-2}u_0\phi\,dx
\end{aligned}
\end{equation}
for all $\phi \in C_0^\infty(\mathbb{R}^N)$, which in turn implies that $u_0$ is a weak solution to \eqref{e1.14}.
Analogous to the process of Lemma \ref{lem2.3}, we obtain that $u_0^\pm\neq0$. Thus, we have completed the proof of this step.
\smallskip

\noindent\textbf{Step 3.} In this step, we prove that problem \eqref{e1.14} possesses
a least energy nodal solution $v_0$, and that there exists a unique
pair $(\alpha_{b_n}, \beta_{b_n})\in\mathbb{R}^+\times\mathbb{R}^+$ satisfying
$\alpha_{b_n}v_0^++ \beta_{b_n}v_0^-\in\mathcal{N}_{b_n}^\lambda$.
Also we prove that $(\alpha_{b_n}, \beta_{b_n})\to (1, 1)$ as $n\to\infty$.

Similar to the proof of Theorem \ref{th1.1}, we can reach the
conclusion that problem \eqref{e1.14} possesses a least energy nodal
solution $v_0$, where $I_0^\lambda(v_0) = c_0$ and
$(I_0^\lambda)'(v_0) = 0$. Then, in view of Lemma \ref{lem2.1}, we
can obtain with ease the existence and uniqueness of the pair
$(\alpha_{b_n}, \beta_{b_n})$ such that
$\alpha_{b_n}v_0^++\beta_{b_n}v_0^-\in\mathcal{N}_{b_n}^\lambda$.
Besides, we know $\alpha_{b_n} > 0$ and $\beta_{b_n} > 0$.
To complete the proof, we just establish that $(\alpha_{b_n}, \beta_{b_n})\to(1, 1)$
as $n \to\infty$. Actually, given that
$\alpha_{b_n}v_0^++\beta_{b_n}v_0^- \in \mathcal{N}_{b_n}^\lambda$, we have
\begin{equation} \label{e3.7}
\begin{aligned}
&\alpha_{b_n}^2\|v_0^+\|^2
+\alpha_{b_n}\beta_{b_n}\int_{\mathbb{R}^N}\Delta v_0^+ \Delta v_0^-dx
\\
&+\alpha_{b_n}^2 b_n \int_{\mathbb{R}^N}|\nabla v_0^+|^2dx \Big( \alpha_{b_n}^2\int_{\mathbb{R}^N}|\nabla v_0^+|^2dx
+\beta_{b_n}^2 \int_{\mathbb{R}^N}|\nabla v_0^-|^2dx\Big)
\\
&= \lambda\int_{\mathbb{R}^N}f(\alpha_{b_n}v_0^+)\alpha_{b_n}v_0^+\,dx
+\int_{\mathbb{R}^N}|\alpha_{b_n}v_0^+|^{2^{**}}dx
\end{aligned}
\end{equation}
and
\begin{equation} \label{e3.8}
\begin{aligned}
&\beta_{b_n}^2\|v_0^-\|^2
+\alpha_{b_n}\beta_{b_n}\int_{\mathbb{R}^N}\Delta v_0^+ \Delta v_0^-dx
\\
&+\beta_{b_n}^2 b_n \int_{\mathbb{R}^N}|\nabla v_0^-|^2dx
\Big( \beta_{b_n}^2\int_{\mathbb{R}^N}|\nabla v_0^-|^2dx
+\alpha_{b_n}^2 \int_{\mathbb{R}^N}|\nabla v_0^+|^2dx\Big)
\\
&= \lambda\int_{\mathbb{R}^N}f(\beta_{b_n}v_0^-)\beta_{b_n}v_0^-\,dx
+\int_{\mathbb{R}^N}|\beta_{b_n}v_0^-|^{2^{**}}dx.
\end{aligned}
\end{equation}a
Since $b_n \searrow 0$, we conclude that the sequences
$\{\alpha_{b_n}\}$ and $\{\beta_{b_n}\}$ are bounded.
Assume, up to a subsequence, $\alpha_{b_n}\to\alpha_0$ and
$\beta_{b_n}\to\beta_0$. Then by \eqref{e3.7} and \eqref{e3.8}, we have
\begin{equation} \label{e3.9}
\alpha_0^2\|v_0^+\|^2 +\alpha_0\beta_0\int_{\mathbb{R}^N}\Delta v_0^+\Delta
v_0^-dx = \lambda\int_{\mathbb{R}^N}f(\alpha_0 v_0^+)\alpha_0 v_0^+\,dx
+\int_{\mathbb{R}^N}|\alpha_0 v_0^+|^{2^{**}}dx
\end{equation}
and
\begin{equation} \label{e3.10}
\beta_0^2\|v_0^+\|^2 +\alpha_0\beta_0\int_{\mathbb{R}^N}\Delta v_0^+\Delta
v_0^-dx = \lambda\int_{\mathbb{R}^N}f(\beta_0 v_0^-)\beta_0 v_0^-\,dx
+\int_{\mathbb{R}^N}|\beta_0 v_0^-|^{2^{**}}dx.
\end{equation}
Noticing that $v_0$ is a nodal solution to problem \eqref{e1.14}, we
obtain
\begin{gather}\label{e3.11}
\|v_0^+\|^2 +\int_{\mathbb{R}^N}\Delta v_0^+\Delta v_0^-dx =
\lambda\int_{\mathbb{R}^N}f(v_0^+)v_0^+\,dx +\int_{\mathbb{R}^N}| v_0^+|^{2^{**}}dx,
\\
\label{e3.12}
\|v_0^+\|^2 \int_{\mathbb{R}^N}\Delta v_0^+\Delta v_0^-dx =
\lambda\int_{\mathbb{R}^N}f(v_0^-)v_0^-\,dx +\int_{\mathbb{R}^N}|v_0^-|^{2^{**}}dx.
\end{gather}
Therefore, from \eqref{e3.9}-\eqref{e3.12}, we can easily obtain that
$(\alpha_{0}, \beta_{0}) = (1, 1)$, and thus Step 3 follows.

We  can now complete the proof of Theorem \ref{th1.3}.
We claim that $u_0$ obtained in
Step 2 is a least energy solution to problem \eqref{e1.14}.
In fact, according to Step 3 and Lemma \ref{lem2.1}, we see that
\begin{align*}
I_0^\lambda(v_0) \leq I_0^\lambda(u_0)
&=\lim_{n\to\infty}  I_{b_n}^\lambda(u_{b_n}) \\
&\leq
\lim_{n\to\infty}
I_{b_n}^\lambda(\alpha_{b_n}v_0^++\beta_{b_n}v_0^-) \\
&= \lim_{n\to\infty}  I_0^\lambda(v_0^++v_0^-) \\
&=I_0^\lambda(v_0),
\end{align*}
which yields completest the proof of Theorem \ref{th1.3}.
\end{proof}

\subsection*{Acknowledgments}

H. Pu was supported by the Graduate Scientific Research Project of
Changchun Normal University (SGSRPCNU No.\ 2020-51).
S. Liang was supported by the
Foundation for China Postdoctoral Science Foundation (Grant no.
2019M662220),  Scientific research projects for Department of
Education of Jilin Province, China (JJKH20210874KJ), Natural Science
Foundation of Changchun Normal University (No.\ 2017-09).
D. D. Repov\v{s} was supported by the Slovenian Research Agency (No.
P1-0292, N1-0114, N1-0083, N1-0064, and J1-8131).
We want to thank the anonymous referees for their comments and suggestions.

\end{document}